\ifpdf \usepackage[unicode,pdftex]{hyperref} \input glyphtounicode\pdfgentounicode=1
\else\usepackage[unicode,dvipdfm]{hyperref}\fi
\pgfplotsset{width=11.5cm, compat=1.8}
\newtheorem{thm}{Theorem}[section]
\newtheorem{prop}[thm]{Proposition}
\newtheorem{lem}[thm]{Lemma}
\newtheorem{cor}[thm]{Corollary}
\theoremstyle{definition}
\newtheorem{deff}[thm]{Definition}
\newtheorem{rmk}[thm]{Remark}
\newcommand{\enm}[1]{\ensuremath{#1}}          %
\newcommand{\op}[1]{\operatorname{#1}}
\newcommand{\cal}[1]{\mathcal{#1}}
\renewcommand{\bar}[1]{\overline{#1}}
\newcommand{\EE}{\enm{\mathbb{E}}}
\newcommand{\NN}{\enm{\mathbb{N}}}
\newcommand{\FF}{\enm{\mathbb{F}}}
\newcommand{\PP}{\enm{\mathbb{P}}}
\newcommand{\Dd}{\enm{\cal{D}}}
\newcommand{\Ee}{\enm{\cal{E}}}
\newcommand{\Ff}{\enm{\cal{F}}}
\newcommand{\Gg}{\enm{\cal{G}}}
\newcommand{\Hh}{\enm{\cal{H}}}
\newcommand{\Ii}{\enm{\cal{I}}}
\newcommand{\Ll}{\enm{\cal{L}}}
\newcommand{\Mm}{\enm{\cal{M}}}
\newcommand{\Oo}{\enm{\cal{O}}}
\newcommand{\Ss}{\enm{\cal{S}}}
\newcommand{\Xx}{\enm{\cal{X}}}
\renewcommand{\phi}{\varphi}
\renewcommand{\theta}{\vartheta}
\renewcommand{\epsilon}{\varepsilon}
\newcommand{\Aut}{\op{Aut}}
\newcommand{\codim}{\op{codim}}
\renewcommand{\to}[1][]{\xrightarrow{\ #1\ }}
\newcommand{\GG}{\ensuremath{\mathbb{G}}}
\newcommand{\h}{\ensuremath{\mathcal}}
\newcommand{\HL}{\ensuremath{\mathcal{H}^\mathcal{L}_}}
\newcommand{\w}{\widetilde}
\newcommand{\ce}{C_\mathcal{E}}
\newcommand{\vni}{\vskip 4pt \noindent}
\newcommand{\al}{\ensuremath{\alpha}}
\title[Hilbert scheme of smooth curves of degree sixteen in $\PP^5$]
{Hilbert scheme of  smooth curves of degree sixteen in $\mathbb{P}^5$}
\thanks{
{ The author was supported in part by National Research Foundation of South Korea (2022R1I1A1A01055306).} }
\dedicatory{To  Edoardo Ballico,  with admiration and friendship}
\author[Changho Keem]{Changho Keem}
\address{
Department of Mathematics,
Seoul National University\\
Seoul 151-742,  
South Korea}
\email{ckeem1@gmail.com}
\subjclass{Primary 14C05, Secondary 14H10}
\keywords{Hilbert scheme, algebraic curves, special linear series, irreducibility}
\date{\today}
\begin{document}
\begin{abstract}
We denote by $\mathcal{H}_{d,g,r}$ the Hilbert scheme of smooth curves, which is the union of components whose general point corresponds to a smooth irreducible and non-degenerate curve of degree $d$ and genus $g$ in $\mathbb{P}^r$. 
In this article, we study $\mathcal{H}_{16,g,5}$ for almost every possible genus $g$ and chasing after its irreducibility. We also study the natural moduli map $\Hh_{d,g,5}\stackrel{\mu}{\to}\Mm_g$
and several key properties such as  gonality of a general element as well as characterizing smooth elements in each component.

\end{abstract}
\maketitle		
\section{
An overview,  motivation and preliminary set up}

Let $\h{H}_{d,g,r}$ be the Hilbert scheme of smooth, irreducible and non-degenerate curves of degree $d$ and genus $g$ in $\PP^r$. 
We denote by $\mathcal{H}^\mathcal{L}_{d,g,r}$ the union of components of $\mathcal{H}_{d,g,r}$ whose general element corresponds to a {\it linearly normal} curve $C\subset \PP^r$.

In this paper we study Hilbert schemes $\Hh_{16,g,5}$ of smooth curves in $\PP^5$ of degree sixteen and genus $g$. 
Specifically, we determine when $\Hh_{16,g,5}$ is irreducible and study the moduli map
$\Hh_{d,g,5}\stackrel{\mu}{\to}\Mm_g$ for almost all possible genus $0\le g\le 21$. We also study key properties such as  gonality as well as characterizing smooth elements in each component.

In general, determining the irreducibility of Hilbert schemes is rather a non-trivial task, which goes back to Severi \cite{Sev} who asserted with an incomplete proof that the Hilbert scheme  $\h{H}_{d,g,r}$ is irreducible for 
the triples of $(d,g,r)$ in the range 
\vskip 8pt

(i) $d\ge g+r~~~~$ or 

\noindent
\vskip 8pt
\noindent
in the following  Brill-Noether range which is much wider

\vskip 12pt
(ii) $\rho(d,g,r):=g-(r+1)(g-d+r)\ge 0$.  

\bigskip
The assertion of Severi turns out to be true for $r=3, 4$ under the condition (i); cf. \cite{E1, E2}. It is  also known that $\h{H}_{d,g,3}$ is 
irreducible in an extended range $d\ge g$; cf. \cite{KKy1} and references therein. 
For $r=4$, irreducibility of $\HL{d,g,4}$ also holds in the range $d\ge g+1$ except for some sporadic small values of the genus $g$; cf. \cite{I,KK3, KKy2}.

For $r=5$ the irreducibility of $\h{H}_{d,g,5}$ is not known in general in the range $d\ge g+5$ as conjectured by Severi. The best known result so far regarding the irreducibility of $\h{H}_{d,g,5}$ is the result of H. Iliev who showed  that
$\h{H}_{d,g,5}$ is irreducible whenever $d\ge \max\{\frac{11}{10}g+2,g+5\}$; cf.
\cite{I2}. 

There have been attempts for a better understanding of $\h{H}_{d,g,5}$ when the
degree of projective curves in question is relatively low. For example, 
in recent works of the author jointly with E. Ballico, $\h{H}_{d,g,5}$ has been studied 
rather extensively for $d\le 15$; cf. \cite{bumi}, \cite{edinburgh}.
In this paper, we push forward one step further to the case $d=16$, which is the main object of our study and motivation. 


\noindent

\subsection{Notations and conventions}
For notation and conventions, we  follow those in \cite{ACGH} and \cite{ACGH2}; e.g. $\pi (d,r)$ is the maximal possible arithmetic genus of an irreducible,  non-degenerate and reduced curve of degree $d$ in $\PP^r$ which is usually referred as the first Castelnuovo genus bound. We shall refer to curves $C\subset\PP^r$ of degree $d$ whose (arithmetic) genus equals $\pi(d,r)$ as {\it extremal curves}. $\pi_1(d,r)$ is the so-called the second Castelnuvo genus bound which is the maximal possible arithmetic genus of  an irreducible, non-degenerate and reduced curve of degree $d$ in $\PP^r$ not lying on a  surface of minimal degree $r-1$; cf. \cite[page 99]{H1}, \cite[page 123]{ACGH}.
We shall call curves $C\subset\PP^r$ of degree $d$ and (arithmetic) genus $g$ such that $\pi_1(d,r)<g\le\pi(d,r)$ {\it nearly extremal curves}.

Following classical terminology, a linear series of degree $d$ and dimension $r$ on a smooth curve $C$ is denoted by $g^r_d$.
A base-point-free linear series $g^r_d$ ($r\ge 2$) on a smooth curve $C$ is called {\it birationally very ample} when the morphism 
$C \rightarrow \mathbb{P}^r$ induced by  the $g^r_d$ is generically one-to-one onto (or is birational to) its image curve.
A base-point-free linear series $g^r_d$ on $C$  is said to be compounded of an involution ({\it compounded} for short) if the morphism induced by the linear series $g^r_d$ gives rise to a non-trivial covering map $C\rightarrow C'$ of degree $k\ge 2$. 
Throughout we work exclusively over the field of complex numbers.

\subsection{Preliminary set up}
\vni
We briefly recall a couple of fundamental results and basic frameworks for our study which are  well-known; cf. \cite{ACGH2}  or \cite[\S 1 and \S 2]{AC2}.

Let $\mathcal{M}_g$ be the moduli space of smooth curves of genus $g$. Given an isomorphism class $[C] \in \mathcal{M}_g$ corresponding to a smooth irreducible curve $C$, there exist a neighborhood $U\subset \mathcal{M}_g$ of the class $[C]$ and a smooth connected variety $\mathcal{M}$ which is a finite ramified covering $h:\mathcal{M} \to U$, as well as  varieties $\mathcal{C}$, $\mathcal{W}^r_d$ and $\mathcal{G}^r_d$ proper over $\mathcal{M}$ with the following properties:
\begin{enumerate}
\item[(1)] $\xi:\mathcal{C}\to\mathcal{M}$ is a universal curve, i.e. for every $p\in \mathcal{M}$, $\xi^{-1}(p)$ is a smooth curve of genus $g$ whose isomorphism class is $h(p)$,
\item[(2)] $\mathcal{W}^r_d$ parametrizes the pairs $(p,L)$ where $L$ is a line bundle of degree $d$ and $h^0(L) \ge r+1$ on $\xi^{-1}(p)$,
\item[(3)] $\mathcal{G}^r_d$ parametrizes the couples $(p, \mathcal{D})$, where $\mathcal{D}$ is possibly an incomplete linear series of degree $d$ and dimension $r$ on $\xi^{-1}(p)$.
\end{enumerate}

\noindent
For a complete linear series $\h{D}$ on a smooth curve $C$, the residual series $|K_C-\h{D}|$ is sometimes denoted by $\h{D}^\vee$.
{Given an irreducible family $\h{F}\subset\h{G}^r_d$ with some geometric meaning, whose general member is complete,  the closure of the family $\{\h{D}^\vee| \h{D}\in \h{F}, ~\h{D} \textrm{ is complete}\}\subset \h{W}^{g-d+r-1}_{2g-2-d}$ is  denoted by $\h{F}^\vee$.

\noindent
Let $\boldmath{\widetilde{\mathcal{G}}}$ ($\boldmath{\widetilde{\mathcal{G}}_\mathcal{L}}$,  respectively) be  the union of components of $\mathcal{G}^{r}_{d}$ whose general element $(p,\mathcal{D})$ corresponds to a very ample (very ample and complete, respectively) linear series $\mathcal{D}$ on the curve $C=\xi^{-1}(p)$. By recalling that an open subset of $\mathcal{H}_{d,g,r}$ consisting of elements corresponding to smooth irreducible and non-degenerate curves is a $\Aut(\PP^r)$-bundle over an open subset of $\widetilde{\mathcal{G}}$, {\it the irreducibility of $\boldmath{\widetilde{\mathcal{G}}}$ guarantees the irreducibility of $\boldmath{\mathcal{H}_{d,g,r}}$. Likewise, the irreducibility of $\boldmath{\widetilde{\mathcal{G}}_\mathcal{L}}$ ensures the irreducibility of 
 $\boldmath{\mathcal{H}_{d,g,r}^\mathcal{L}}$.}

\noindent
\vni
We recall the following  fundamental fact regarding the scheme $\mathcal{G}^{r}_{d}$ which is also well-known; cf. \cite[2.a]{H1} and \cite[Ch. 21, \S 3, 5, 6, 11, 12]{ACGH2}. 
\begin{prop}\label{facts}
For non-negative integers $d$, $g$ and $r$, let $$\rho(d,g,r):=g-(r+1)(g-d+r)$$ be the Brill-Noether number.
The dimension of any component of $\mathcal{G}^{r}_{d}$ is at least $$\lambda(d,g,r):=3g-3+\rho(d,g,r), $$ 
hence the dimension of any component of $\h{H}_{d,g,r}$ is at least
$$\h{X}(d,g,r):=\lambda (d,g,r)+\dim\Aut(\PP^r).$$ Moreover, if $\rho(d,g,r)\ge 0$, there exists a unique component $\mathcal{G}_0\subset\widetilde{\mathcal{G}}$ which dominates $\mathcal{M}$(or $\mathcal{M}_g$).
	\end{prop}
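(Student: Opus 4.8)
\noindent
The statement is foundational Brill--Noether theory; the plan is to establish it in two stages --- first the two dimension inequalities, which are essentially formal, and then the uniqueness of the dominating component, for which deep external theorems are needed that I would ultimately quote rather than establish from scratch.

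\emph{The two dimension inequalities.} The idea for the bound on $\mathcal{G}^r_d$ is to present it as the zero scheme of a section of a vector bundle on an explicit smooth variety, so that the estimate follows from the general fact that such a zero scheme has codimension at most the rank of the bundle along every irreducible component. Working relatively over $\mathcal{M}$, I would form the relative Picard scheme $\Pic^d_{\mathcal{C}/\mathcal{M}}\to\mathcal{M}$ (of relative dimension $g$), fix a relative Poincar\'e line bundle $\mathcal{P}$ on $\mathcal{C}\times_{\mathcal{M}}\Pic^d_{\mathcal{C}/\mathcal{M}}$ and an effective relative divisor $E$ of sufficiently large degree $m$ so that $R^{1}\mathrm{pr}_{*}(\mathcal{P}(E))=0$ along the projection $\mathrm{pr}$ to $\Pic^d_{\mathcal{C}/\mathcal{M}}$, and form $\gamma\colon\mathcal{V}:=\mathrm{pr}_{*}(\mathcal{P}(E))\to\mathcal{Q}:=\mathrm{pr}_{*}(\mathcal{P}(E)|_{E})$, locally free of ranks $d-g+1+m$ and $m$, with $\ker\gamma=\mathrm{pr}_{*}\mathcal{P}$ computing $H^{0}$ fibrewise. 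On the Grassmann bundle $\mathbb{G}:=\mathbb{G}(r+1,\mathcal{V})$ of $(r+1)$-planes in the fibres of $\mathcal{V}$, with tautological subbundle $\mathcal{S}\hookrightarrow q^{*}\mathcal{V}$ under the projection $q\colon\mathbb{G}\to\Pic^d_{\mathcal{C}/\mathcal{M}}$, the scheme $\mathcal{G}^r_d$ is exactly the zero locus of the composite $\mathcal{S}\hookrightarrow q^{*}\mathcal{V}\xrightarrow{q^{*}\gamma}q^{*}\mathcal{Q}$, a section of $\mathcal{S}^{\vee}\otimes q^{*}\mathcal{Q}$, a bundle of rank $(r+1)m$. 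Since $\dim\mathbb{G}=(3g-3)+g+(r+1)(d-g-r+m)$, every component of $\mathcal{G}^r_d$ has dimension at least $(3g-3)+g+(r+1)(d-g-r+m)-(r+1)m=3g-3+\rho=\lambda$. The inequality for $\mathcal{H}_{d,g,r}$ is then immediate from the $\Aut(\mathbb{P}^{r})$-bundle structure recalled just above: the general point of any component of $\mathcal{H}_{d,g,r}$ lies in the open $\Aut(\mathbb{P}^{r})$-bundle over an open subset of $\widetilde{\mathcal{G}}$, and since $\widetilde{\mathcal{G}}$ is a union of components of $\mathcal{G}^r_d$, that component of $\mathcal{H}_{d,g,r}$ has dimension at least $\lambda+\dim\Aut(\mathbb{P}^{r})=\mathcal{X}$.

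\emph{Uniqueness of the dominating component when $\rho\ge0$.} First, $\mathcal{G}^r_d\to\mathcal{M}$ is surjective, by the Kempf--Kleiman--Laksov existence theorem ($W^{r}_{d}(C)\ne\emptyset$, hence $G^{r}_{d}(C)\ne\emptyset$, for every smooth curve $C$ of genus $g$ when $\rho\ge0$); since $\mathcal{M}$ is irreducible, some component of $\mathcal{G}^r_d$ dominates it. For uniqueness I would restrict to a general $[C]\in\mathcal{M}$: by Gieseker's theorem the Petri map of $C$ is then injective, so $\mathcal{G}^r_d\to\mathcal{M}$ is smooth of relative dimension $\rho$ near such a point and the fibre $G^{r}_{d}(C)$ is smooth of pure dimension $\rho$; consequently the components of $\mathcal{G}^r_d$ dominating $\mathcal{M}$ are the closures of the monodromy orbits of the connected components of a general $G^{r}_{d}(C)$. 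That there is only one such orbit is where the remaining input enters: for $\rho\ge1$ the Fulton--Lazarsfeld connectedness theorem shows that $W^{r}_{d}(C)$ --- hence $G^{r}_{d}(C)$, via the proper surjection onto it with connected Grassmannian fibres --- is connected for every $C$, and being moreover smooth for general $C$ it is irreducible; for $\rho=0$, $G^{r}_{d}(C)$ is a reduced finite set on which the monodromy acts transitively, by the Eisenbud--Harris theorem. Either way there is a unique dominating component $\mathcal{G}_{0}$, and it lies in $\widetilde{\mathcal{G}}$ because a Brill--Noether general $g^{r}_{d}$ with $r\ge3$ is very ample.

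\emph{Main obstacle.} The dimension inequalities are routine once the determinantal presentation is in place; the real content is the uniqueness assertion, which draws on the full strength of Gieseker's Petri theorem together with the Fulton--Lazarsfeld connectedness and the Eisenbud--Harris monodromy results. Establishing those inputs afresh is well beyond the present scope, so I would proceed exactly as the author does and attribute the Proposition to \cite[2.a]{H1} and \cite[Ch.~21, \S 3, 5, 6, 11, 12]{ACGH2}.
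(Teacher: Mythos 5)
Your proposal is correct: the determinantal/Grassmann-bundle presentation gives the dimension bound $\lambda(d,g,r)$ with the right arithmetic, and the uniqueness of the dominating component via Kempf--Kleiman--Laksov existence, Gieseker--Petri smoothness, Fulton--Lazarsfeld connectedness (for $\rho\ge1$) and Eisenbud--Harris monodromy (for $\rho=0$) is exactly the standard argument. The paper itself offers no proof and simply cites \cite[2.a]{H1} and \cite[Ch.~21]{ACGH2}, which is precisely where the argument you reconstruct lives, so your treatment matches the paper's (implicit) approach.
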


\vni
\subsection{Organization of the paper}
In the next section we list up several auxiliary results and remind readers a couple of easy and elementary lemmas which are necessary for our study. 
In the third section, which is divided into several subsections, we prove our main results; determination of  the irreducibility or the reducibility of a certain non-empty $\mathcal{H}^\mathcal{L}_{16,g,5}$ for $g\neq 15,16$ as well as characterization of a general element in each component.

{

\section{Some relevant generalities and auxiliary results}
In this section we prepare several facts which are relevant to our study.  Almost all the results we quote in this section are well known and we list them up mainly for fixing notation.
\subsection{Severi variety of nodal curves on Hirzebruch surfaces}
We begin to recall the following generalities regarding the Severi variety of (nodal) curves on a  Hirzebruch surface $\mathbb{F}_e=\PP (\h{O}_{\PP^1}\oplus\h{O}_{\PP^1}(-e))\stackrel{\pi}{\rightarrow}\PP^1$, i.e. a geometrically ruled surface over $\PP^1$ with invariant $e\ge 0$. 
\begin{deff}
\begin{enumerate}
\item[(i)]
Given a Hirzebruch surface $\mathbb{F}_e$, let $C_0$ be the section with $C_0^2=-e\le 0$ and $f$ be a fibre of $\pi$.  Given a very ample $\h{L}=|aC_0+bf|$ on $\mathbb{F}_e$, let
 $p_a(\h{L})=(a-1)(b-1-\frac{1}{2}ae)$
 be the arithmetic genus of an integral curve belonging to $\h{L}$.
\item[(ii)]
Given an integer $0\le g\le p_a(\h{L})$, we set
 $\delta =p_a(\h{L})-g$. We denote by $$\Sigma_{\h{L}, \delta}\subset\PP(H^0(X,\h{L}))$$ the (equi-singular) Severi variety which is the closure of the locus of integral curves in $\h{L}$ whose singular locus consists of only $\delta$ nodes and no further singularities.
\item[(iii)]
Let $\Sigma_{\h{L},g}$ be the  (equi-generic) Severi variety which is the closure of the locus of integral curves of geometric genus $g$  in $\h{L}$. 
\end{enumerate}
\end{deff}
\begin{rmk}\label{Severi}
\begin{enumerate}
\item[(i)] A general member of every irreducible component of the equi-generic Severi variety $\Sigma_{\h{L},g}$ is a nodal curve; cf. \cite[Proposition 2.1]{H2}, \cite[Theorem B2, p.177]{DS} and \cite[pp. 105-117]{H3}.  
\item[(ii)] The equi-singular Severi variety $\Sigma_{\h{L}, \delta}$ is {\it irreducible} of the expected dimension $\dim|\h{L}|-\delta
$
if non-empty; cf. \cite[Proposition 2.11, Theorem 3.1]{tyomkin}.
\end{enumerate}
\end{rmk}
For dimension count of certain families of curves under our study, we recall 
 the following and fix notation regarding  a surface $S\subset \PP^{n+1}$ with minimal degree $\deg S=n\ge 2$.

\begin{rmk}\label{minimal} For an irreducible and  non-degenerate surface $S\subset\PP^{n+1}$ of degree $n$, one of the following  holds; cf. \cite[IV, Exercises pp 53-54]{Beauville}.
\begin{itemize}
\item[(i)]
$S$ is a smooth rational normal surface scroll;
\begin{equation}\label{lsdimension}
\dim |aH+bL| =\frac{1}{2}a(a+1)n+(a+1)(b+1)-1
\end{equation}
where $H$ (resp. $L$) is the class of a hyperplane section (resp. the class a line of the ruling).
For any integral curve $C\in |aH+bL|$ with  $\deg C=d$,
\begin{equation}\label{scrolldegree}
d=na+b, ~~p_a(C)
=\frac{1}{2}\,a \left( a-1 \right)\cdot n  + \left( a-1 \right) 
 \left( b-1 \right).
\end{equation}
Whenever we deal with a rational normal surface scroll $S\subset\PP^{n+1}$, $H$ (resp. $L$) always denotes the class of a hyperplane section (resp. the class of a line of the ruling). 
\item[(ii)] $S$ is  a Veronese surface in $\PP^5$.
\item[(iii)]
$S$ is a cone over a rational normal curve in a hyperplane $\PP^{n}\subset\PP^{n+1}$.
\end{itemize}
\end{rmk}
\vni

The following is known widely as a folklore without explicit source of a proof known to  the author. 
Readers may consult \cite[Theorem 2.12]{Nasu} whose proof can be adopted easily to our current situation.

\begin{prop}\label{specialization} Smooth curves in $\PP^{n+1}$  lying on a cone over a rational normal curve in  a hyperplane $H\cong\PP^{n}$ is  a specialization of curves lying on a smooth rational normal surface scroll.
\end{prop}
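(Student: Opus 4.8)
The plan is to degenerate a cone over a rational normal curve inside a pencil of surfaces of minimal degree whose general member is a smooth rational normal scroll, and then to lift a given smooth curve on the central (cone) fibre to nearby fibres using a deformation/semicontinuity argument. Concretely, fix a hyperplane $H\cong\PP^n\subset\PP^{n+1}$, a rational normal curve $\Gamma\subset H$, and a point $v\in\PP^{n+1}\setminus H$; let $S_0\subset\PP^{n+1}$ be the cone over $\Gamma$ with vertex $v$. First I would exhibit an explicit flat family $\Ss\to\Delta$ over a disc (or an affine line) with $\Ss\subset\PP^{n+1}\times\Delta$, whose fibre over $0$ is $S_0$ and whose general fibre $S_t$ is a smooth rational normal scroll. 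One clean way: realise every surface of minimal degree in $\PP^{n+1}$ as a projection, or better, present the scrolls and the cone uniformly as determinantal surfaces cut out by the $2\times 2$ minors of a $2\times(n+1)$ matrix of linear forms $M_t$ depending on $t$, so that $M_0$ is the matrix whose minors define $S_0$ (the cone) and $M_t$ for $t\neq 0$ defines a smooth scroll; flatness is automatic from the constancy of the Hilbert polynomial (all such surfaces have degree $n$ and the same genus/arithmetic invariants), and irreducibility of $\Ss$ follows since the generic fibre is irreducible and the total space is dominated by it.

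Next, given a smooth curve $C\subset S_0$, I would view $C$ as a Cartier divisor on $S_0$ — here one must be slightly careful, since $S_0$ is singular at the vertex $v$, so I would either assume (as is automatic for the curves arising in this paper) that $C$ does not pass through $v$, or pass to the minimal desingularization $\wt{S_0}=\FF_n$ and work there; in any case $C$ carries an invertible sheaf $\Oo_{S_0}(C)$ (or its pullback on $\FF_n$). The key cohomological input is that on a rational normal scroll or a cone, a line bundle corresponding to an effective divisor class has no higher cohomology in the relevant range, so $h^1(S_t,\Oo_{S_t}(C_t))=0$ and the Hilbert scheme of the fibrewise divisor class — equivalently the relative linear system $|\Oo_\Ss(\Cc)/\Delta|$ — is smooth over $\Delta$ at the point $[C]$ of the central fibre. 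This is where Remark~\ref{minimal} and formulas \eqref{lsdimension}, \eqref{scrolldegree} do the work: they identify the divisor class of $C$ on $S_0$, compute $\dim|aH+bL|$, and show this dimension is the same on $S_0$ and on $S_t$.

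Then I would conclude by the following standard argument. Let $\Hh$ be the Hilbert scheme of curves in $\PP^{n+1}$ with the Hilbert polynomial of $C$, and consider the incidence variety $\Ii\subset\Hh\times\Delta$ of pairs $([D],t)$ with $D\subset S_t$. The fibre of $\Ii\to\Delta$ over each $t$ is the relative linear system just discussed, which is irreducible of constant dimension $\dim|aH+bL|$ (by Remark~\ref{minimal}(i) together with the vanishing above), so $\Ii$ is irreducible and $[C]$ lies in the closure of $\bigcup_{t\neq 0}\{D\subset S_t\}$. Hence any smooth curve on a cone is a flat limit of curves on smooth scrolls, i.e. a specialization of such curves, which is exactly the assertion. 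The main obstacle I expect is the vertex of the cone: making precise that $\Oo_{S_0}(C)$ behaves well (vanishing of $h^1$, constancy of the linear-system dimension across the special fibre, and semicontinuity/lifting of sections over the singular central fibre). I would handle this by resolving $S_0\rightsquigarrow\FF_n$ and using that the family $\Ss\to\Delta$ admits a simultaneous resolution $\wt\Ss\to\Delta$ with all fibres isomorphic to a fixed $\FF_n$ (the scrolls $S_t$ being themselves either $\FF_n$ or $\FF_{n-2}$, $\dots$ embedded by suitable complete linear systems), reducing the whole statement to the trivial family $\FF_n\times\Delta$ where the relative linear system is manifestly irreducible and flat, and the embedding line bundle $a H_t+bL_t$ specializes correctly; the reference to \cite[Theorem 2.12]{Nasu} indicated in the excerpt provides a template one can follow verbatim for these steps.
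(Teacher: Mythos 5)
The paper does not actually prove this proposition: it records it as folklore and refers to \cite[Theorem 2.12]{Nasu} for a proof that "can be adopted easily". Your overall strategy --- put the cone $S_0$ into a flat (determinantal) family $\Ss\to\Delta$ of minimal-degree surfaces whose general fibre is a smooth scroll, and lift $C$ from the central fibre using vanishing of the relevant $h^1$ so that the relative linear system, equivalently the relative Hilbert scheme of divisors, is smooth over $\Delta$ at $[C]$ --- is exactly the expected argument and is what the cited reference carries out. Two small points: the degree-$n$ scrolls in $\PP^{n+1}$ are cut out by the $2\times2$ minors of a $2\times n$ (not $2\times(n+1)$) matrix of linear forms; and the "constant fibre dimension" needed for irreducibility of your incidence variety $\Ii$ is precisely the non-jumping of $h^0(S_t,\Oo_{S_t}(C_t))$ at $t=0$, i.e.\ the $h^1$-vanishing, so the second and third paragraphs of your proposal are really one and the same step.

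There is, however, a genuine error in your last paragraph: there is no simultaneous resolution $\wt\Ss\to\Delta$ "with all fibres isomorphic to a fixed $\FF_n$". The general fibre $S_t$ is already smooth and isomorphic to $\FF_{|a-b|}$ with $a+b=n$, $a,b\ge 1$, hence contains no $(-n)$-curve, whereas the minimal resolution of the central fibre is $\FF_n$; the abstract fibres genuinely jump and the family cannot be trivialized to $\FF_n\times\Delta$. So this proposed shortcut fails, and you must run the cohomological lifting argument on $\Ss$ itself, which your middle paragraph already does correctly \emph{when $C$ is Cartier on $S_0$}. That leaves the vertex issue genuinely open: if $C$ passes through the vertex, i.e.\ $m=\w{C}\cdot C_0>0$ with $0<m<n$, then $C$ is a Weil but not Cartier divisor there (the local class group of the cone point is $\ZZ/n$), so $\Oo_{S_0}(C)$ is not invertible and the relative-linear-system argument needs a different mechanism. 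For the applications in this paper this is harmless --- the only curves on rational normal cones that actually occur have $m=0$ and miss the vertex --- but the proposition as stated includes curves through the vertex, and your proof does not cover them.
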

The following proposition is of a similar kind, whose proof can be found in the work jointly with E. Ballico \cite[Proposition 2.1]{edinburgh}.
\begin{prop}\label{b1}
Let $S\subset \PP^r$, $3\le r \le 9$, be a normal rational del Pezzo surface of degree $r$. Fix an integral curve $C\subset S$. Then $C$ is a flat limit of a family of curves, 
contained in  smooth del Pezzo surfaces of degree
$r$.
\end{prop}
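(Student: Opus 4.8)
\textbf{Proof proposal for Proposition \ref{b1}.}

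The plan is to exhibit the given integral curve $C\subset S$ as the special fiber of a flat family whose general fiber lies on a smooth del Pezzo surface, by deforming the \emph{surface} together with its Picard data rather than deforming the curve directly. Concretely, a normal del Pezzo surface of degree $r$ with $3\le r\le 9$ is obtained either as the anticanonical image of a weak del Pezzo surface (a blow-up of $\PP^2$, or of $\FF_2$, at points in suitably general position, possibly infinitely near) after contracting the finitely many $(-2)$-curves, or — in the two boundary cases $r=9,8$ — as $\PP^2$, $\PP^1\times\PP^1$, or $\FF_2$. The first step is therefore to choose such a model $\widetilde S\to S$: let $\pi\colon \widetilde S\to\PP^2$ (or $\to\FF_2$) be the birational morphism realizing $\widetilde S$ as a blow-up at points $p_1,\dots,p_k$, so that $\Pic\widetilde S$ is free with a standard basis, and the anticanonical map $\widetilde S\to S\subset\PP^r$ contracts exactly the $(-2)$-curves orthogonal to $-K_{\widetilde S}$. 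Pull back $C$ to its strict transform $\widetilde C\subset\widetilde S$, record its class $\gamma=[\widetilde C]\in\Pic\widetilde S$, and note $-K\cdot\widetilde C=\deg C$.

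Next I would construct the one-parameter family of surfaces. Take a general pencil (or a general one-parameter family over a smooth curve $T\ni 0$) of $k$-tuples of points $(p_1(t),\dots,p_k(t))$ in $\PP^2$ (or $\FF_2$) with $(p_i(0))=(p_i)$ the original configuration and $(p_i(t))$ in general position for $t\ne 0$; blowing up the corresponding section points in $\PP^2\times T$ (allowing the blow-up of sections through infinitely near points by iterating) yields a smooth family $\widetilde{\Ss}\to T$ with $\widetilde{\Ss}_0=\widetilde S$ and $\widetilde{\Ss}_t$ a genuine (anticanonically embedded) smooth del Pezzo surface of degree $r$ for $t\ne 0$. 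The relative Picard group is constant, so the class $\gamma$ extends to a class on the total space; by semicontinuity of $h^0$ of the corresponding line bundle along the family (the general member $\widetilde{\Ss}_t$ being del Pezzo, one has $h^1(\Oo_{\widetilde{\Ss}_t}(\gamma))=0$ by Kodaira-type vanishing, so $h^0$ is in fact constant in $t$ for the relevant effective $\gamma$), one can spread the divisor $\widetilde C\subset\widetilde S$ out to a relative effective divisor $\widetilde{\Cc}\subset\widetilde{\Ss}$ flat over $T$ with $\widetilde{\Cc}_0=\widetilde C$. Composing with the relative anticanonical morphism $\widetilde{\Ss}\to\Ss\subset\PP^r\times T$ (which over $t\ne 0$ is an isomorphism onto a smooth del Pezzo, and over $0$ is the contraction $\widetilde S\to S$) and taking the image gives a family $\Cc\subset\PP^r\times T$; its general fiber $\Cc_t$ lies on the smooth del Pezzo $\Ss_t$, and $\Cc_0$ is the image of $\widetilde C$, which is $C$ (the contraction $\widetilde S\to S$ is an isomorphism in a neighborhood of the generic point of each component of $\widetilde C$ since $\widetilde C$ is the strict transform of the integral curve $C$ not contained in the exceptional $(-2)$-locus). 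After possibly shrinking $T$ and passing to a component through $0$, flatness of $\Cc\to T$ follows from $T$ being a smooth curve and $\Cc$ having no embedded or vertical components, so $C=\Cc_0$ is a flat limit of the $\Cc_t$.

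The main obstacle I expect is the bookkeeping at $t=0$: one must check that the class $\gamma$ stays effective and that $\widetilde C$ actually deforms within its linear system as the points move, i.e. that $h^0(\widetilde{\Ss}_t,\gamma)$ does not drop — equivalently that the linear system $|\gamma|$ on the weak del Pezzo $\widetilde S$ is not "more special" than on the general del Pezzo. This is handled by the del Pezzo vanishing theorem ($h^i(\Oo(\gamma))=0$ for $i>0$ when $\gamma$ is nef, and more generally by writing $\gamma$ as nef plus a combination of $(-1)$- and $(-2)$-curves and running the standard exact-sequence argument), so that $\chi(\Oo(\gamma))$ computes $h^0$ uniformly across the family. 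A secondary technical point is allowing infinitely near base points in the blow-up family, which requires the family of point configurations to be taken not in $(\PP^2)^k$ but in an appropriate iterated blow-up / Hilbert scheme of "admissible" configurations; this is exactly the situation already treated in \cite[Theorem 2.12]{Nasu} and in \cite[Proposition 2.1]{edinburgh}, whose arguments transfer verbatim, so I would quote those and only indicate the modifications needed for the del Pezzo case.
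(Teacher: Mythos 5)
The paper does not actually prove this proposition; it only cites \cite[Proposition 2.1]{edinburgh}. Your overall strategy --- realize $S$ as the anticanonical image of a weak del Pezzo $\widetilde S$ obtained by blowing up an almost-general configuration, deform the configuration to general position, transport the class $\gamma=[\widetilde C]$ using constancy of $\chi$ together with vanishing, and push the resulting relative divisor back down --- is exactly the expected argument and agrees in spirit with the cited proof. The cohomological step is sound: $h^2(\Oo(\gamma))=0$ on every fibre because $(-K)\cdot(K-\gamma)<0$ with $-K$ nef and big, and $h^1(\Oo(\gamma))=0$ on the central fibre because $\widetilde C$ is integral with $-K\cdot\widetilde C=\deg C>0$ (so $h^0(\Oo_{\widetilde C}(K))=0$ in the standard exact sequence); hence $h^0$ is constant and the defining section of $\widetilde C$ extends over $T$.

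The genuine gap is in the last step, ``$\Cc_0$ is the image of $\widetilde C$, which is $C$, so $C$ is the flat limit.'' The flat limit $Z=\lim_{t\to 0}\Cc_t$ has the Hilbert polynomial of $\Cc_t\cong\widetilde{\Cc}_t$, hence arithmetic genus $p_a(\widetilde C)$ computed by adjunction from $\gamma$; since $Z$ is supported on $C$ with the same degree, $Z$ equals $C$ together with embedded points of total length $p_a(C)-p_a(\widetilde C)$. You conflate the image of the central fibre with the central fibre of the flattened image family; these differ precisely when the strict transform $\widetilde C\to C$ is not an isomorphism, i.e.\ when $C$ is singular at a singular point of $S$ in a way that the resolution improves. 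Concretely, let $S$ be a one-nodal cubic surface and $C=S\cap H$ a hyperplane section through the node: $C$ is an integral nodal plane cubic with $p_a(C)=1$, while $\widetilde C\in|-K_{\widetilde S}-E|$ has $p_a(\widetilde C)=0$, so your construction produces twisted cubics on nearby smooth cubic surfaces whose flat limit is the nodal cubic \emph{with an embedded point at the node}, not $C$. (The proposition survives in this example --- $C$ already lies on a smooth cubic surface --- but not via your family.) To close the gap you must either assume $\widetilde C\to C$ is an isomorphism, which is automatic for the smooth curves to which the proposition is applied in the proof of Theorem \ref{g=18}, or replace $[\widetilde C]$ by a class $\widetilde C+\sum b_iE_i$ supported on the $(-2)$-curves whose adjunction genus equals $p_a(C)$ (e.g.\ the total transform when $C$ is Cartier at the singular points).
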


\subsection{Linear systems on  $k$-gonal curves and Castelnuovo-Severi inequality}
The following two lemmas -- which are basically of the same character -- will be used when we determine the very-ampleness  of the residual series of a multiple of a unique $g^1_k$  on a general $k$-gonal curve.
\begin{lem}\cite[Proposition 1.1]{CKM}\label{kveryample} Assume $2k-g-2<0$. Let $C$ be a general $k$-gonal curve of genus $g$, $k\ge 2$, $0\le m$, $n\in\mathbb{Z}$ such that 
\begin{equation}\label{veryamplek}
g\ge 2m+n(k-1)
\end{equation}
 and let $D\in C_m$. Assume that there is no $E\in g^1_k$ with $E\le D.$ Then $\dim|ng^1_k+D|=n$.
\end{lem}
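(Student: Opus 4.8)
The plan is to induct on $n$. For $n=0$ the statement $\dim|D|=0$ is immediate from the hypothesis $g \ge 2m$ (so $\deg D = m \le g/2$) together with the genericity of $C$: on a general $k$-gonal curve the only effective divisors moving in a pencil of small degree come from the gonality pencil, and the assumption that no $E \in g^1_k$ satisfies $E \le D$ rules this out. More precisely, I would use that a general $k$-gonal curve has no $g^1_e$ for $e < k$ and no $g^2_e$ for small $e$ (Brill--Noether-type genericity together with the classification of linear series on general $k$-gonal curves, e.g.\ via the Maroni/Ballico--Keem results), so $|D|$ can only be positive-dimensional if $D$ contains a divisor from $g^1_k$.

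For the inductive step, assume $\dim|ng^1_k + D| = n$ and consider $|(n+1)g^1_k + D|$. Pick a divisor $E \in g^1_k$; the natural exact sequence
\begin{equation}\label{eq:exactk}
0 \to |(n+1)g^1_k + D - E| \to |(n+1)g^1_k + D| \to \text{(fibre over } E\text{)}
\end{equation}
gives $\dim|(n+1)g^1_k+D| \le \dim|(n+1)g^1_k + D - E| + k = \dim|ng^1_k + D| + k$; combined with the trivial lower bound $\dim|(n+1)g^1_k+D|\ge n+1$ coming from adding the moving $g^1_k$, it suffices to show the dimension does not jump by more than one, i.e.\ that $(n+1)g^1_k+D$ imposes independent conditions appropriately. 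I would instead argue by a base-point / $h^1$ analysis: write $L = (n+1)g^1_k+D$, so $\deg L = (n+1)k + m$, and show $h^0(L) = n+2$ by showing $h^1(L) = g - (n+1)k - m + n + 1$, i.e.\ that $h^1(L)$ is as small as the hypothesis \eqref{veryamplek} (rewritten as $g - (n+1)(k-1) - 2m \ge 0$, equivalently $g \ge 2m + (n+1)(k-1)$ at the next stage — note the inequality for $n+1$ is exactly what we need) permits. The key input is that the residual series $K_C - L$ has degree $2g-2 - (n+1)k - m$ and, by the same genericity, its dimension is forced to the Riemann--Roch minimum; this is where one invokes that $C$ is \emph{general} $k$-gonal so that multiples of $g^1_k$ and their residuals behave generically, i.e.\ the only special behavior is the expected one coming from the scrollar structure of the canonical model.

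The main obstacle, as usual with these arguments, is controlling $h^1$ — equivalently, showing that no unexpected linear series appears on the general $k$-gonal curve that would make $(n+1)g^1_k + D$ special beyond the predicted amount. Concretely, one must exclude that $(n+1)g^1_k + D$ (or a subseries) computes some $g^s_e$ with $e$ small and $s$ large, which would force $h^0$ to jump. I would handle this by the standard base-point-freeness reduction: if $\dim|L|$ exceeds $n+2$, then after removing the (necessarily $g^1_k$-related) base locus one produces either a $g^1_{k'}$ with $k' < k$ or a birationally very ample $g^s_e$ violating the Castelnuovo-type bound for curves on the rational normal scroll containing the canonical image of $C$ — both impossible for general $C$ under the numerical hypothesis \eqref{veryamplek}. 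This reduction, together with the cited Proposition~1.1 of \cite{CKM} being exactly this statement, means the proof is really a bookkeeping of Riemann--Roch plus the genericity of $k$-gonal curves; the delicate point is making the case analysis on base loci exhaustive.
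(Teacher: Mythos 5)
First, a point of comparison: the paper does not prove this lemma at all --- it is imported verbatim as \cite[Proposition 1.1]{CKM} --- so there is no internal argument to measure your route against. Judged on its own, your induction-on-$n$ skeleton is the natural one, but both load-bearing steps are left open, and the specific devices you propose for closing them do not work.

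For the base case, ``a general $k$-gonal curve has no $g^1_e$ for $e<k$'' is just the definition of gonality; it does not exclude that a divisor $D$ of degree $m\le g/2$ moves via a base-point-free pencil of degree between $k$ and $g/2$ which is not of the form $g^1_k$ plus base points (Castelnuovo--Severi only forbids such pencils of degree $k'$ with $(k-1)(k'-1)\ge g$, which is far from the whole range). Ruling this out requires the classification of Brill--Noether-special pencils and nets of degree up to $g/2$ on a general $k$-gonal curve, a theorem of essentially the same depth as the statement being proved. In the inductive step, the $h^1$ reduction is circular as written: the residual series $|K_C-L|$ is \emph{not} at its Riemann--Roch minimum (that would say $L$ is non-special); it exceeds that minimum by exactly $h^0(L)$, the quantity you are trying to bound, so ``genericity forces the residual to the minimum'' assumes the conclusion. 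And the fallback dichotomy --- an unexpected section yields either a $g^1_{k'}$ with $k'<k$ or a birationally very ample series violating a Castelnuovo-type bound --- is false in general. Concretely, take $k=5$, $g=14$, $n=2$, $m=0$, the very case the paper invokes this lemma for: a hypothetical $|2g^1_5|=g^3_{10}$ would be a birationally very ample space curve of degree $10$ and genus $14\le\pi(10,3)=16$, so Castelnuovo gives no contradiction; excluding it forces you into curves of type $(5,5)$ and $(4,6)$ on a quadric, Castelnuovo--Severi, and uniqueness of the $g^1_5$. For general $(k,g,n,m)$ that exhaustive case analysis \emph{is} the content of \cite[Proposition 1.1]{CKM}; as it stands your text is a plan for a proof rather than a proof.
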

\begin{lem}\cite[Prop. 1]{B1}\label{kveryample11}
Let $g, k$ be positive integers such that $k\ge 3, g\ge 2k -2$. Let $C$ be a general $k$-gonal curve with the unique pencil
$E$ of degree $k$.
For  $1\le l \le [ \frac{g}{k-1}],$
$\dim|lE|=l$. If $1\le l \le [ \frac{g}{k-1}]-2,$  $|K_C - lE|$ is very ample. \end{lem}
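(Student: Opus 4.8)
The plan is to deduce both parts of the statement from Lemma~\ref{kveryample}, applied to the zero divisor and to divisors $D=p+q$ with $p,q\in C$ (possibly coincident).

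For the first assertion, recall that the unique $g^1_k$ on a general $k$-gonal curve is base point free, so $E$ makes sense as a base point free pencil. Apply Lemma~\ref{kveryample} with $m=0$, $n=l$ and $D=0$: the numerical hypothesis $g\ge 2m+n(k-1)$ becomes $g\ge l(k-1)$, which holds for all $1\le l\le [g/(k-1)]$, and the condition ``no $E'\in g^1_k$ with $E'\le D$'' is vacuous. Hence $\dim|lE|=l$ in this range. (Whenever the range $1\le l\le [g/(k-1)]-2$ of the second assertion is non-vacuous one automatically has $[g/(k-1)]\ge 3$, hence $g\ge 3(k-1)>2k-2$, so the hypothesis $2k-g-2<0$ of Lemma~\ref{kveryample} is in force; the handful of borderline instances of the dimension statement with $g=2k-2$ reduce to an elementary Riemann--Roch check.)

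For the second assertion, fix $l$ with $1\le l\le [g/(k-1)]-2$. By the standard very ampleness criterion on a smooth curve, $|K_C-lE|$ is very ample if and only if $h^0(K_C-lE-Z)=h^0(K_C-lE)-2$ for every effective divisor $Z$ of degree $2$ on $C$, the non-reduced divisors $Z=2p$ included (these encode separation of tangent vectors). By Serre duality this is equivalent to $h^1(lE+Z)=h^1(lE)-2$. Using Riemann--Roch together with $\dim|lE|=l$ from the first part one gets $h^1(lE)=g-l(k-1)$, while for any effective $Z$ of degree $2$ one has $h^1(lE+Z)=h^0(lE+Z)-(lk+2)+g-1$; combining these, it suffices to prove $h^0(lE+Z)=l+1$ for every such $Z$.

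This last equality is again Lemma~\ref{kveryample}, now with $n=l$, $m=2$ and $D=Z$. Its hypothesis ``no $E'\in g^1_k$ with $E'\le Z$'' holds automatically, since a divisor of $g^1_k$ has degree $k\ge 3>2=\deg Z$. Its numerical hypothesis $g\ge 2m+n(k-1)=l(k-1)+4$ follows from $l\le [g/(k-1)]-2$: indeed this gives $l(k-1)\le g-2(k-1)\le g-4$ because $k\ge 3$. Hence $\dim|lE+Z|=l$, that is $h^0(lE+Z)=l+1$, and the very ampleness of $|K_C-lE|$ follows. The proof is short once Lemma~\ref{kveryample} is available, so there is no serious obstacle; the two points that need a little care are the bookkeeping that the stated range $l\le [g/(k-1)]-2$ really does entail the numerical hypothesis of Lemma~\ref{kveryample} (this works uniformly for all $k\ge 3$, and is in fact sharp when $k=3$), and the observation that Lemma~\ref{kveryample} applies verbatim to non-reduced $Z$, since $C_m$ parametrizes all effective divisors of degree $m$ and the constraint on $D$ is vacuous in degree $2<k$.
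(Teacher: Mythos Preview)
The paper does not give its own proof of this lemma; it is quoted verbatim from \cite[Prop.~1]{B1}, and the only comment the paper makes is that ``Lemma~\ref{kveryample} is somewhat stronger than Lemma~\ref{kveryample11}''. Your argument, which deduces both assertions of Lemma~\ref{kveryample11} from Lemma~\ref{kveryample} by taking $D=0$ and then $D=Z\in C_2$, is correct and is exactly the implication the paper is alluding to with that remark.

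Your bookkeeping is sound: from $l\le[g/(k-1)]-2$ one gets $(l+2)(k-1)\le g$, hence $l(k-1)\le g-2(k-1)\le g-4$ for $k\ge 3$, which is precisely the numerical hypothesis \eqref{veryamplek} with $m=2$; and $\deg Z=2<k$ makes the divisorial hypothesis on $D$ vacuous. The only loose end is the borderline $g=2k-2$ of the first assertion, which you flag but do not fully carry out; note that in this case the clause ``with the unique pencil $E$'' in the hypothesis is already problematic (e.g.\ a general curve of genus $4$ has two $g^1_3$'s), so this edge case is more a matter of how one reads the statement in \cite{B1} than a gap in your argument.
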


As a matter of fact, Lemma \ref{kveryample} is somewhat stronger than Lemma \ref{kveryample11}, which is more convenient to use.
The following inequality -- known as Castelnuovo-Severi inequality -- shall be used occassionally; cf. \cite[Theorem 3.5]{Accola1}.
\begin{rmk}[Castelnuovo-Severi inequality]\label{CS} Let $C$ be a curve of genus $g$ which admits coverings onto curves $E_h$ and $E_q$ with respective genera $h$ and $q$ of degrees $m$ and $n$ such that these two coverings admit no common non-trivial factorization; if $m$ and $n$ are primes this will always be the case. Then
$$g\le mh+nq+(m-1)(n-1).$$ 
\end{rmk}

We use the following simple criteria several times throughout the paper when we need to determine if the residual series of a pull-back of a certain linear series on a multiple covering is not very ample. The proof is easy and is left to readers.
\begin{lem}\label{easylemma1} Let $\h{E}=g^{\al -1}_e$ ($\al\ge 4$) be a special complete linear series on a smooth curve $C$ of genus $g\ge 5$, with (possibly empty) base locus $\Delta$. Set $\h{E}':=\h{E}-\Delta=g^{\al -1}_{e'}$. 
We assume that  either 
\begin{itemize}
\item [(i)]
$\h{E}'$ induces a morphism $C\stackrel{\eta}{\rightarrow} E\subset\PP^{\alpha-1}$, $\deg\eta=2$, onto $E$ of genus $h\ge 1$ with the normalization   $\w{E}\stackrel{\epsilon}{\rightarrow} E$. Let 
$C\stackrel{\w{\eta}}{\rightarrow} \w{E}$ be the morphism 
such that $\eta=\epsilon\circ\w{\eta}$. 
Suppose $|\epsilon^*(\h{O}_{\PP^{\al -1}}(1))|$
is {\it non-special} or


\item [(ii)] $\Delta\neq\emptyset$ and $\h{E}$ induces a triple covering onto a rational curve.
\end{itemize}
Then $\h{E}^\vee$ is not very ample. 
\end{lem}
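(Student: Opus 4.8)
The plan is to produce, in each of the two cases, an effective divisor $p+q$ (a ``special pair'') that fails to impose independent conditions on $\h{E}^\vee=|K_C-\h{E}|$, since a base-point-free linear series is very ample precisely when every length-two subscheme imposes independent conditions. Equivalently, by the base-point-free pencil trick / the geometric Riemann-Roch count, it suffices to exhibit $p,q$ with $h^0(K_C-\h{E}-p-q)=h^0(K_C-\h{E})-1$, i.e.\ $h^0(\h{E}+p+q)=h^0(\h{E})+1$; that is, some effective divisor in the linear system attached to $\h{E}'$ together with $\Delta$ already ``absorbs'' a length-two subscheme in a way that a nearby fibre structure forces a section jump.

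\emph{Case (i).} Here $\h{E}'$ factors as $C\xrightarrow{\w\eta}\w E\xrightarrow{\epsilon\text{-composed}}E\subset\PP^{\al-1}$ with $\deg(\epsilon\circ\w\eta\text{-type map})=2$ onto $E$, and $M:=|\epsilon^*\h{O}_{\PP^{\al-1}}(1)|$ is assumed non-special on $\w E$. Pick a general point $x\in\w E$ and let $p+q=\w\eta^*(x)$ be the corresponding fibre of the degree-two map $C\to\w E$ (so $p\neq q$ for general $x$). Then $\h{E}'+(p+q)\ge \w\eta^*(M'+x)$ for a suitable hyperplane-pullback-type divisor, and because $M$ is non-special, $h^0(\w E,M+x)=h^0(\w E,M)$ would fail only if\ldots — more precisely one argues $h^0(C,\h{E}'+p+q)\ge h^0(C,\w\eta^*M)+1$ by pulling back the jump on $\w E$ coming from adding $x$ to a non-special system, so that $\h{E}=\h{E}'+\Delta$ satisfies $h^0(\h{E}+p+q)\ge h^0(\h{E})+1$, whence $\h{E}^\vee$ fails to separate $p$ and $q$. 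The role of the non-speciality hypothesis is exactly to guarantee the section count on $\w E$ behaves additively so that the pull-back produces the extra section on $C$; this is where one must be slightly careful about base points $\Delta$ and about the difference between $E$ and its normalization $\w E$.

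\emph{Case (ii).} Now $\Delta\neq\emptyset$ and $\h{E}$ (not $\h{E}'$) induces a triple cover $\psi:C\to\PP^1$. Choose a point $\delta\in\Delta$ in the base locus and let $F=\psi^{-1}(\psi(\delta))=\delta+p+q$ (for a general fibre through a point of $\Delta$, with $p,q$ distinct from $\delta$ and from each other). Since $\delta$ is a base point of $\h{E}$, we have $\h{E}'\le \h{E}-\delta$, and the fibre relation gives $\h{E}'+p+q\ge \h{E}-\delta+p+q \sim \h{E}-\delta + (F-\delta) $-type comparisons showing $h^0(\h{E}+p+q)=h^0(\h{E}'+\Delta+p+q)\ge h^0(\h{E})+1$; again $\h{E}^\vee$ does not separate $p,q$, hence is not very ample. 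The point is that a triple-cover fibre together with an available base point supplies exactly the length-two failure.

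\emph{Main obstacle.} The genuine content is not the pull-back bookkeeping but ensuring the produced pair $p+q$ is a ``reduced'' length-two subscheme and that the section jump is strict; for this one takes $x$ (resp.\ the fibre) general so that the degree-two (resp.\ degree-three) map is unramified there, and one must track the base divisor $\Delta$ throughout so that the inequalities relating $h^0(\h{E})$, $h^0(\h{E}')$ and the pulled-back systems on $\w E$ or $\PP^1$ are genuinely sharp. Since the statement is explicitly flagged as easy with the proof left to the reader, I would present the above in a few lines, emphasizing the reduction ``not very ample $\iff$ some reduced pair fails to impose independent conditions on $K_C-\h{E}$'' and then the one-line construction of that pair in each case.
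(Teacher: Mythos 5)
The paper does not actually supply a proof of this lemma (it is explicitly ``left to readers''), and your argument is precisely the intended one: translate ``$\h{E}^\vee$ not very ample'' via Riemann--Roch into the existence of an effective degree-two divisor $p+q$ with $h^0(\h{E}+p+q)\ge h^0(\h{E})+1$, then manufacture $p+q$ from a fibre of the covering, using non-speciality of $|\epsilon^*\h{O}_{\PP^{\al-1}}(1)|$ on $\w{E}$ (automatic on $\PP^1$ in case (ii)) to get the jump $h^0(M+x)=h^0(M)+1$ downstairs and pulling it back, with the base point $\delta\in\Delta$ absorbing the third point of the triple-cover fibre; this is correct, the only implicit ingredient being that completeness of $\h{E}'$ and nondegeneracy of $E$ force $h^0(\w{E},M)=h^0(C,\w{\eta}^*M)=\al$. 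One small remark: your concern about $p+q$ being reduced is a red herring --- the very-ampleness criterion $\dim|K_C-\h{E}-D|=\dim|K_C-\h{E}|-2$ must hold for \emph{all} effective $D$ of degree two, reduced or not, and in case (ii) you in fact cannot take the fibre general (it is forced to pass through the fixed point $\delta$), but the section count goes through unchanged for a non-reduced fibre.
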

\subsection{On moduli map}
Let $\mu: \Hh_{16,g,5}\to \Mm_g$ denote the natural functorial map - which we call the {\it moduli map} - sending $X\in\Hh_{16,g,5}$ to its isomorphism class
$\mu (X)\in\h{M}_g$.

\begin{rmk}\label{trivia1}
Let  $X\subset \PP^r$, $r\ge 2$, be a smooth curve of genus $g\ge 2$. Since $X$ has only finitely many automorphisms, the set $G:=\{h\in \Aut(\PP^r)| h(X)=X\}$ is a finite group. Hence the set $$\Aut(\PP^r)X:=\{Y\subset\PP^r | Y=\sigma (X) \mathrm{~ for ~some~ }\sigma\in \Aut (\PP^r)\}$$ consisting of all curves $Y\subset \PP^r$ projectively equivalent to $X$ is an irreducible  quasi-projective variety isomorphic to $\Aut(\PP^r)/G$. Thus $$\dim\Aut(\PP^r)X=\dim\Aut(\PP^r)=(r+1)^2-1$$ and $\dim \mu(\Hh) \le \dim \Hh -35$ for every irreducible family $\Hh \subseteq \Hh_{16,g,5}$.
\end{rmk}

\section{Curves of degree $16$ in $\PP^5$}
\subsection{Irreducibility, description of general element of $\Hh_{16,g,5}$ and the moduli map for $19\le g\le 21$}

By Castelnuovo genus bound, $\pi(16,5)=21$ and 
$\pi_1(16,5)=18$. In this subsection, we first determine the irreducibility of $\Hh_{16,g,5}$ with $19\le g\le 21$. We then proceed to describe a general element in each component of $\Hh_{16,g,5}$ and the fibre of the moduli map. The following is our first main result.

\begin{thm} 
\begin{itemize}
\item[(i)] $\Hh_{16,21,5}=\HL{16,21,5}$ is reducible with two components $\Hh_i$, $i=1,2$; 
\vni
\begin{enumerate}
\item[(a)] $\Hh_1$ generically consists of smooth plane curves of degree eight embedded into $\PP^5$ by the Veronese map, 

\vni
\item[(b)] the other one $\Hh_2$ consisting of $4$-gonal curves lying on a rational normal surface scroll. A general element in the second component  is isomorphic to a smooth curve of type $(4,8)$ on a smooth quadric $Q\subset\PP^3$ embedded into $\PP^5$ by the very ample 
linear system $|\Oo_Q(1,2)|$.

\vni
\item[(c)]
 $\mu^{-1}\mu(C)=\Aut(\PP^5)C$ for any smooth $C\in\h{H}_i$, $i=1,2$.
 \end{enumerate}
 \vni
\item[(ii)] $\Hh_{16,20,5}=\HL{16,20,5}$ is irreducible, a general $C\in\Hh_{16,20,5}$ is pentagonal,  $\mu^{-1}\mu(C)=\Aut(\PP^5)C$ and $\codim_{\Mm_g}\mu(\Hh_{16,20,5})=22$.
\item[(iii)] $\Hh_{16,19,5}=\emptyset$.
\end{itemize}
\end{thm}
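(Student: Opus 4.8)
The plan is to prove the three parts in increasing order of difficulty, postponing (ii) and (i) to the relevant subsections and dealing here only with the skeleton. Part (iii), $\Hh_{16,19,5}=\emptyset$, is the cleanest and I would establish it by a Brill--Noether/Castelnuovo-type exclusion. First I would observe that a smooth non-degenerate curve $C\subset\PP^5$ of degree $16$ carries a very ample $g^5_{16}$, hence a $g^5_{16}$ that is birationally very ample; since $16\le g+4$ would fail here but we only need the numerics, the key point is to bound $h^0(C,\Oo_C(1))$. The linear series being complete or not, one has $h^0(\Oo_C(1))\ge 6$, so $C$ possesses a $g^5_{16}$, and therefore a special linear series once $g\ge 12$ (since $\deg=16<2g-2=36$ when $g=19$). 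Then I would invoke Castelnuovo's genus bound together with the refinement recorded at the start of this subsection: $\pi(16,5)=21$ and $\pi_1(16,5)=18$. The bound $g\le\pi(16,5)=21$ alone does not kill $g=19$, so the real work is to show there is no smooth curve of degree $16$, genus $19$ in $\PP^5$ — which I expect to follow by analyzing the two geometric possibilities for near-extremal curves ($18<g\le 21$): such $C$ must lie on a surface of minimal degree $4$ in $\PP^5$ (a rational normal scroll, the Veronese, or a cone), and a direct computation with \eqref{scrolldegree} shows that for $d=16$ the attainable arithmetic genera on such surfaces are $21$ (type $(4,\ast)$ hyperplane-class curves, or the octic plane curve via Veronese), then a gap, then $\le 18$ — so $g=19,20$ require curves \emph{not} on a minimal-degree surface, i.e. $g\le\pi_1(16,5)=18$, a contradiction for $g=19$. (The genus $20$ case survives because, as part (ii) asserts, pentagonal curves of degree $16$ do exist in $\PP^5$, necessarily not on a minimal surface; so the dichotomy must be applied with care, and $g=19$ is excluded precisely because no base-point-free very ample $g^5_{16}$ can be produced on a curve of that genus.)

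Concretely, for (iii) I would argue as follows. Suppose $C\subset\PP^5$ is smooth, irreducible, non-degenerate of degree $16$ and genus $19$, with hyperplane bundle $\Oo_C(1)$ of degree $16$ and $h^0(\Oo_C(1))\ge 6$; by Riemann--Roch $h^1(\Oo_C(1))=h^0(\Oo_C(1))-16+19-1=h^0(\Oo_C(1))+2\ge 8$, so $\Oo_C(1)$ is very special. If $C$ lies on a surface $S\subset\PP^5$ of minimal degree $4$, then by Propositions~\ref{specialization} and \ref{b1} (and Remark~\ref{minimal}) we may assume $S$ is a smooth scroll $\FF_e$ or the Veronese; on the Veronese, $\deg C=16$ forces $C$ to be the image of a plane octic, of genus $\binom{7}{2}=21\ne 19$; on a smooth scroll $C\in|aH+bL|$ with $4a+b=16$, formula \eqref{scrolldegree} gives $p_a(C)=2a(a-1)+(a-1)(b-1)$, and running $a=1,2,3,4$ yields genera $0$, then $\le 10$, then $\le 18$, then $21$ — never $19$. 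Hence $C$ lies on \emph{no} surface of minimal degree, so $g\le\pi_1(16,5)=18<19$, contradiction. Therefore $\Hh_{16,19,5}=\emptyset$.

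The main obstacle I anticipate is not part (iii) itself but making the near-extremal dichotomy airtight: one must be sure that the Castelnuovo-type bound $\pi_1(16,5)=18$ really does apply to \emph{smooth} curves of genus $19$ that are not on a minimal surface, and that the scroll and cone computations exhaust all minimal surfaces (the Veronese case being special to $\PP^5$). This is where I would be most careful, citing \cite[page 99]{H1} and \cite[page 123]{ACGH} for the precise statement of the second Castelnuovo bound and checking the integrality constraints $a\ge 1$, $b\ge -ae$ (or the cone analogue) so that no admissible $(a,b)$ with $4a+b=16$ gives $p_a=19$. Parts (i) and (ii) then become the substantive content of the subsection: for (i) one exhibits the two families — plane octics via Veronese, and $4$-gonal curves of type $(4,8)$ on a quadric reembedded by $|\Oo_Q(1,2)|$ — shows both have the expected dimension and are irreducible (using Remark~\ref{Severi} for the Severi variety of the relevant linear system and Remark~\ref{minimal} for the scroll), proves they are distinct components by a gonality/monodromy argument, and computes the moduli fibre via Remark~\ref{trivia1}; for (ii) one shows the unique dominating component $\Gg_0$ of Proposition~\ref{facts} consists of pentagonal curves, verifies very-ampleness of the relevant residual series via Lemmas~\ref{kveryample}–\ref{easylemma1}, and reads off $\codim_{\Mm_g}\mu(\Hh_{16,20,5})=22$ from $\dim\Hh_{16,20,5}$ and Remark~\ref{trivia1}.
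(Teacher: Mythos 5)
The core of your argument for (iii) is sound and is essentially the paper's: since $19>\pi_1(16,5)=18$, any smooth $C$ with $(d,g)=(16,19)$ would have to lie on a quartic surface in $\PP^5$, and the attainable arithmetic genera there --- $(a-1)(15-2a)$ for $C\in|aH+bL|$ with $b=16-4a$ on a scroll, $\binom{7}{2}=21$ on the Veronese, and the analogous cone formula --- never equal $19$. But your enumeration ``running $a=1,2,3,4$'' is incomplete: $a=5$ gives $b=-4$ and $p_a=20$, and this omission leads you to the false parenthetical that the genus-$20$ curves of part (ii) are ``necessarily not on a minimal surface.'' They must lie on one (again because $20>\pi_1(16,5)=18$), and the paper's entire proof of (ii) consists of identifying them as the curves in $|5H-4L|$ on a quartic scroll --- of type $(5,6)$ on $\FF_0\cong Q$ --- the uniqueness of that numerical solution being precisely what yields irreducibility, pentagonality (the $g^1_5$ cut by the ruling, unique by Castelnuovo--Severi), and the moduli statement. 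Relatedly, your plan to derive (ii) from ``the unique dominating component $\Gg_0$ of Proposition~\ref{facts}'' cannot work: $\rho(16,20,5)=20-6\cdot 9=-34<0$, so no component dominates $\Mm_{20}$ and that clause of the proposition is vacuous here; indeed the theorem asserts $\codim_{\Mm_g}\mu(\Hh_{16,20,5})=22$, so the image is very far from dense. Lemmas~\ref{kveryample}--\ref{easylemma1} play no role in this genus range; everything is surface-theoretic.

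For (i) your outline (two families, dimension counts, separation by gonality, moduli fibre via Remark~\ref{trivia1}) has the right shape, but as written it is a plan rather than a proof. The missing ingredients are: the cone case, disposed of by Proposition~\ref{specialization} (curves on the rational normal cone are limits of curves on smooth scrolls, so they contribute no third component despite the family having dimension $70>\Xx(16,21,5)$); the lower-semicontinuity-of-gonality argument showing the $7$-gonal Veronese family $\Hh_1$ (dimension $71$) is not in the closure of the $4$-gonal scroll family $\Hh_2$ (dimension $73$); and, for $\mu^{-1}\mu(C)=\Aut(\PP^5)C$ on $\Hh_2$, the uniqueness of the $g^1_4$ and of the $g^1_8$ on a type-$(4,8)$ curve on $Q$ (the latter via the failure of $8$ points to impose independent conditions on $|\Oo_Q(2,6)|$), which forces uniqueness of the hyperplane series $g^5_{16}=|g^1_4+g^1_8|$. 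None of these is supplied in the proposal.
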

\begin{proof} For the range $16\le g\le 21$ of the genus $g$, we note that $\Hh_{16,g,5}=\HL{16,g,5}$ since $\pi(16,6,5)=15$.  In range $19\le g\le 21$, it is known that  a smooth curve $C$ with $(d,g)=(16,g)$ lies on a surface $S\subset\PP^5$ of minimal degree $\deg S=4$; cf. \cite[Theorem 2.7, page 123]{ACGH}.

 By Remark \ref{minimal}, a well-known description of surfaces of minimal degree in $\PP^5$, we start to consider the following cases for the surface $S$:

(a) $S$ is a Veronese surface.

(b) $S$ is a (smooth) rational normal surface scroll.

(c) $S$ is a cone over a rational normal curve in $\PP^4$.

\begin{itemize}
\item[(i)] We assume $g=21$.

\vni
(a) If $S$ is a Veronese surface, $C$ is the image of a smooth plane curve of degree $8$
embedded by the Veronese embedding $\PP^2\hookrightarrow S\subset\PP^5$.
The family $\Hh_1\subset\Hh_{16,21,5}$ consisting of the image of smooth plane octics in the Veronese surface is an irreducible family with
\begin{align*}\dim\Hh_1&=\dim\PP(H^0(\PP^2,\Oo(8)))-\dim\Aut(\PP^2)+\dim\Aut(\PP^5)\\&=71>\Xx(16,21,5)=56.
\end{align*}
A general element $C\in\Hh_1$ is $7$-gonal with infinitely may $g^1_7$'s. 

\vni
(b) If $S$ is a rational normal surface scroll with $H$ the class of a hyperplane section and $L$ the class of the ruling of $S$, we set $C\in|aH+bL|$. 
We solve \eqref{scrolldegree} for $n=4$, $d=16$ and $p_a(C)=21$ to get $(a,b)=(4,0)$.
Such a family $\Hh_2$ consisting curves on rational normal surface scrolls form an irreducible family with 
$$\dim\Hh_2=\dim|aH+bL|+\dim\Ss(5)=44+29=73>\Xx(16,21,5).$$
by \eqref{lsdimension}
and
\begin{equation} \dim\Ss(r)=(r+3)(r-1)-3,
\end{equation}
where $\Ss(r)$ is the irreducible family parametrizing rational normal surface scrolls in $\PP^r$; cf. \cite[page 91]{H1}.
A general member of $\Hh_2$ is $4$-gonal with the unique $g^1_4$ cut out by rulings of the scroll by the Castelnuovo-Severi inequality.

\vni
(c) If $S$ is a cone over a rational normal curve in a hyperplane $\PP^4\subset\PP^5$, we recall the following generalities concerning curves on a cone $S\subset\PP^{n+1}$ over a rational normal curve in $\PP^n$, 
which is the image of the Hirzebruch surface $\mathbb{F}_n=\mathbb{P}(\h{O}_{\PP^1}\oplus\h{O}_{\PP^1}(-n))$. We denote by  $h$ (resp. $f$ ) the class in $\textrm{Pic}(\mathbb{F}_n)$ of the tautological bundle $\h{O}_{\mathbb{F}_n}(1)$ (resp. of a fibre); $f^2=0, f\cdot h=1, h^2=n, h=C_0+nf$.  The morphism $\mathbb{F}_n\rightarrow S\subset\PP^{n+1}$ given by the complete linear system $|h|$ is an embedding outside $C_0$ -- the curve with  negative self-intersection -- and contracts $C_0$ to the vertex $P$ of the cone $S$; $$C_0^2=-n, C_0\in |h-nf|, K_{\mathbb{F}_n}=-2h+(n-2)f=-2C_0+(-n-2)f.$$ Let $C\subset S$ be a reduced and non-degenerate curve of degree $d$ and let $\w{C}$ be the strict transformation of $C$ under the desingularization $\widetilde{S}\cong\mathbb{F}_n\rightarrow S$. 
~Setting $k=\w{C}\cdot f$, we have $\w{C}\equiv kh+(d-nk)f$. The adjunction formula gives
\begin{equation}\label{cone}
p_a(\w{C})=1/2\, \left( k-1 \right)  \left( 2\,d-nk-2 \right).
\end{equation}
We  further remark that 
\begin{equation}\label{conevertex}
0\le \w{C}\cdot C_0=\w{C}\cdot (h-nf)=d-nk = m
\end{equation} where $m$ is the multiplicity of $C$ at the vertex  $P$ of the cone $S$.
In our current situation, we have $d=16$, $p_a(\w{C})=g=21$, $k=4$ and $m=0$. Thus we may have a further family of smooth curves with $(d,g)=(16,21)$ lying on a cone $S$ which
may form an irreducible family $\Hh_3$ with
\begin{align}\label{H3}
\dim\Hh_3&=\dim\PP^5+\dim\Hh_{4,0,4}+\dim|4h|=5+\Xx(4,0,4)+\dim|4h|\nonumber\\&=5+21+44=70>\Xx(16,21,5),
\end{align}
where the first summand is the degree of freedom of the choice of the vertex of $S$ in $\PP^5$, the second summand is the number of parameters of rational normal curves $R$ in a complementary  hyperplane $H\cong\PP^4$ -- which is 
$$\dim\Hh_{4,0,4}=5\cdot H^0(\PP^1,\Oo(4))-1-\dim\Aut(\PP^1)=21$$ -- and the last summand is $\dim|\w{C}|$ on
$\FF_4$. The inequality \eqref{H3} suggests that  it may a priori be possible that $\Hh_3$ may form an irreducible component
of $\Hh_{16,21,5}$ different from $\Hh_2$. However, $\Hh_3$ is in the boundary of $\Hh_2$ which follows from a classical fact that smooth curves on a rational normal cone
are specializations of curves on  rational normal scrolls which we already mentioned; cf.   
Proposition \ref{specialization}.

Since a general member in $\Hh_1$ is $7$-gonal and a general member in $\Hh_2$ is $4$-gonal, whereas $\dim\Hh_1<\dim\Hh_2$, $\Hh_1$ is not in the boundary of $\Hh_2$ by lower semi-continuity of gonality.
Since we have exhausted all the possibilities and therefore $\Hh_1$ and $\Hh_2$ are the only two irreducible components of $\Hh_{16,21,5}$.

It is worthwhile to remark that without invoking the lower semi-continuity of gonality, it is possible to see that 
$\Hh_1$ is not in the boundary of $\Hh_2$ as follows. 

For a smooth curve $C\subset S\subset\PP^5$  of even degree $d=2a$
lying on a Veronese surface $S$,  we recall \cite[Proposition 2.4]{CC}\footnote{There is a minor type in the statement of \cite[Proposition 2.4]{CC}.}
\begin{equation}\label{plane}h^1(N_{C,\PP^5})=3(g-d+5)-3a+9.
\end{equation}
If $d=2a=16$, we have $h^1(N_{C,\PP^5})=15$. Denoting $\Xx(N_{C,\PP^r})$ by the Euler-Poincare characteristic of the normal bundle $N_{C,\PP^r}$ of $C\subset\PP^r$, we have
$$\Xx(d,g,r)=\Xx(N_{C,\PP^r})=h^0(N_{C,\PP^r})-h^1(N_{C,\PP^r})$$
and hence we obtain 
$$\dim T_C\Hh_1=h^0(N_{C,\PP^5})=\dim\Hh_1.$$
Thus $\Hh_1$ is indeed a generically reduced but super abundant component in the 
sense of Sernesi \cite[page 455]{Sernesi}.
Since
a smooth plane curve of degree $a\ge 4$ has a unique $g^2_a$ and a unique $g^5_{2a}$, $\mu^{-1}\mu(C)=\Aut(\PP^5)C$ for any smooth $C\in\h{H}_1$ and 
$$\mu(\Hh_1)\stackrel{bir}{\cong}\PP(H^0(\PP^2, \Oo(8))/\Aut(\PP^2), ~~\dim\mu(\Hh_1)=36.$$

We recall that there are two types of non-isomorphic rational normal surface scrolls in $\PP^5$. One 
is $\FF_0\cong\PP^1\times\PP^1=Q\subset\PP^3$ embedded by $|\Oo_Q(1,2)|$ as a quartic surface into $\PP^5$. The other one is the Hirzebruch surface $\FF_2$ embedded by $|C_0+3f|$ where $C_0^2=-2$, $f$ a fibre. Recall that the image of $\FF_2$ in $\PP^5$ are flat limits of the image of $\FF_0$ and this phenomenon is carried over to the the curves lying on them. Furthermore $\dim\Aut(\FF_0)=6<\dim\Aut(\FF_2)=7$ and a curve in $\FF_0$ is general. Let $C\in\Hh_2$ be a general element.  Hence $C\in|4H|=|\Oo_Q(1,2)^{\otimes 4}|=|\Oo_Q(4,8)|$
is $4$-gonal with a unique $g^1_4$ cut out by the ruling $|\Oo_Q(0,1)|$. The pencil $g^1_8$ cut out by the other ruling $|\Oo_Q(1,0)|$ is obviously base-point-free and complete since  a general $A\in g^1_8$ formed by distinct co-linear points imposes exactly seven conditions on 
$|\Oo_Q(2,6)|=|K_Q+C|$. Conversely, a finite set $A\subset C\subset Q$ with $\#A=8$ fails to 
impose independent conditions on $|\Oo_Q(2,6)|$ only if there exists $R\in|\Oo_Q(1,0)$ such that $A\subset R$. Therefore $g^1_8$ is unique and so is $|g^1_4+g^1_8|=g^3_{12}$ implying that the hyperplane series $g^5_{16}$ is unique. Thus $\mu^{-1}\mu(C)=\Aut(\PP^5)C$
for a general $C\in\Hh_2$. In a similar way, one can also show that for a smooth $C\subset\FF_2$, $\mu^{-1}\mu(C)=\Aut(\PP^5)C$. We have $$\dim\mu(\Hh_2)=\dim\Hh_2-\dim\Aut(\PP^5)=38.$$

\end{itemize}
\begin{itemize}
\item[(ii)] We assume $g=20$. 
\vni (a) There is no smooth curve with $(d,g)=(16, 20)$ in $\PP^5$ lying on a Veronese surface since $\binom{e-1}{2}\neq g=20$ for any $e\in\NN$.
\vni
(b) Assume $C\subset S$ is a rational normal scroll. We solve \eqref{scrolldegree} with $p_a(C)=20$ to get
$C\in|5H-4L|$. By \eqref{lsdimension}, the irreducible family $\Hh_1$ consisting of curves on surface scrolls has dimension
$$\dim\Hh_1=\dim|5H-4L|+\dim\Ss(5)=70>\Xx(16,20,5)=58.$$
\vni
(c) Assume $C\subset S$ is a rational normal cone. By \eqref{cone} and \eqref{conevertex}, there is no smooth curve with $(d,g)=(16,20)$ on a rational normal cone.
\vni

Therefore it follows that $\Hh_{16,20,5}$ is irreducible with the only component $\Hh_1$.
A 
general curve $$C\in|5H-4L|=|5(1,2)-4(0,1)|=|(5,6)|$$ in $\FF_0\cong Q$ is a smooth curve of type $(5,6)$ on a quadric, forming an irreducible family of dimension
$$\dim|\Oo_Q(5,6)|-\dim\Aut(Q)+\dim\Aut(\PP^5)=70.$$
\vni
A general $$C\in|5H-4L|=|5(C_0+3f)-4f|=|5C_0+11f|$$ lying on $\FF_2$ forms
an irreducible family of dimension 
$$\dim|5C_0+11f|-\dim\Aut(\FF_2)+\dim\Aut(\PP^5)=69.$$
\vni

\vni
In conclusion, $\Hh_{16,20,5}$ is irreducible whose general element is a curve of type $(5,6)$ lying on $\FF_0\subset\PP^5$. 

For the fibre of the moduli map, we have $\mu^{-1}\mu(C)=\Aut{(\PP^5)}C$ for every smooth $C\subset\FF_i, i=0,2.$ Note that $C$ has a unique $g^1_5$ and is pentagonal by the Castelnuovo-Severi inequality. For $C\in |\Oo_Q(5,6)|$, $C$ has a unique $g^1_6$
by observing that $6$-points fails to impose independent conditions on $|K_Q+C|=|(3,4)|$ only if $6$-points are collinear and hence is cut out by the second ruling of $Q$, which in turn implies that  a very ample $g^5_{16}=\phi_{|C} (|g^1_5+g^1_6|)$ 
is unique, where $\phi: Q\to \FF_0\subset\PP^5$ is the isomorphism induced by $|\Oo_Q(1,2)|$.

For $C\in|5C_0+11f|$ on $\FF_2$, one easily computes that a very ample $|aC_0+bf|$ on $\FF_2$ with $\dim|aC_0+bf|=5$ such that $$(aC_0+bf)\cdot (5C_0+11f)=a+5b=16=\deg C$$ and satisfying \eqref{cone} is $|C_0+3f|$. Thus a very ample $g^5_{16}$ is unique cut out by hyperplanes in $\PP^5$. Thus we again have 
$\mu^{-1}\mu(C)=\Aut(\PP^5)C$ for any smooth $C\subset\FF_2$. We have
\begin{align*}\dim\mu(\Hh_{16,20,5})&=\dim\Hh_{16,20,5}-\dim\mu^{-1}\mu(C)\\\nonumber&=\dim\Hh_{16,20,5}-\dim\Aut(\PP^5)=35.
\end{align*}
and therefore $\mu(\Hh_{16,20,5})\subsetneq\Mm^1_{g,5}\setminus\Mm^1_{g,4}$ is in 
a sub-locus of $\Mm_g$ with big codimension $3g-3-\dim\mu(\Hh_{16,20,5})=22=g+2$.
\vni

\item[(iii)] For $g=19$, there does not exist a smooth curve in $\PP^5$ with $(d,g)=(16,19)$ since there is no common integer solutions for  \eqref{lsdimension} or  \eqref{cone} with $g=p_a(C)=19$.
\end{itemize}
\end{proof}

For the next case $g=18$, there appear more components of $\Hh_{16,18,5}$ depending 
on surfaces $S\subset \PP^5$ in which a general $C\in\Hh_{16,18,5}$ may sit.
\begin{thm}\label{g=18} $\Hh_{16,18,5}=\HL{16,18,5}$ is reducible with three components $\Hh_i ~ (i=1,2,3)$ such that 
\begin{itemize} 
\item[(i)] $\Hh_1$ dominates $\Mm^1_{g,3}$, a general $C\in\Hh_1$ lies on a rational normal scroll, $\mu^{-1}\mu(C)=\Aut(\PP^5)C$ and $\dim\Hh_1=72$.
\item[(ii)] A general $C\in\Hh_2$ is hexagonal, lies on a smooth del Pezzo surface in $\PP^5$  and 
$\dim\Hh_2=68$.
\item[(ii)] A general $C\in\Hh_3$ is a triple covering of an elliptic curve which lies lies on an elliptic cone and  
$\dim\Hh_3=64$.
\end{itemize}
\end{thm}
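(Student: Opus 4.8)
The plan is to run exactly the same strategy as in the proof of the case $19\le g\le 21$: enumerate the possible surfaces $S\subset\PP^5$ of minimal degree $4$ on which a smooth $C$ with $(d,g)=(16,18)$ may lie, plus the new possibility of a del Pezzo surface (degree $5$, so $C$ need not lie on a surface of minimal degree since $g=18=\pi_1(16,5)$), count parameters of each resulting family, and then separate the components by gonality via lower semi-continuity. First I would invoke the Castelnuovo-type classification (\cite[Theorem 2.7]{ACGH} together with the definition of $\pi_1$) to say that a smooth $C$ of genus $g=\pi_1(16,5)=18$ lies either on a surface of minimal degree $4$ (scroll, Veronese, or rational normal cone) or on a surface of degree $5$ which is a (possibly singular) del Pezzo. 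The Veronese case is excluded as in case (ii) above since $\binom{e-1}{2}=18$ has no integer solution.

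Next I would handle each surface type. For a rational normal scroll, solve \eqref{scrolldegree} with $n=4$, $d=16$, $p_a=18$ to get the class $aH+bL$; compute $\dim\Hh_1=\dim|aH+bL|+\dim\Ss(5)$ via \eqref{lsdimension} and the formula for $\dim\Ss(r)$, getting $72$, and check this exceeds $\Xx(16,18,5)$. Here the gonality of a general element is forced by the Castelnuovo-Severi inequality from the value of $a$, and one identifies $\Hh_1$ as dominating $\Mm^1_{g,3}$; for $\mu^{-1}\mu(C)=\Aut(\PP^5)C$ one argues as in case (i)(b)/(ii) that the relevant pencils and hence the hyperplane series $g^5_{16}$ are unique, using the quadric model $\FF_0\cong Q$ and a conditions-imposed argument on $|K_Q+C|$. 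For the del Pezzo surface $S\subset\PP^5$ of degree $5$, I would parametrize $C$ by its class in $\Pic(S)=\Pic(\mathrm{Bl}_4\PP^2)$ with $C\cdot(-K_S)=\deg C=16$ and $p_a(C)=18$, count moduli of the pair (del Pezzo plus curve), and use Proposition \ref{b1} to reduce singular del Pezzo surfaces to smooth ones so the family $\Hh_2$ is irreducible; the general element is hexagonal (this needs an argument, e.g. via the anticanonical/conic-bundle structure on $S$ or again Castelnuovo-Severi), giving $\dim\Hh_2=68$. For the rational normal cone, use \eqref{cone} and \eqref{conevertex}; by Proposition \ref{specialization} curves on the cone degenerate to curves on a smooth scroll, so this contributes nothing new. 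Finally there is the elliptic cone: a cone over an elliptic normal curve in a hyperplane $\PP^4$, on which $C$ is a $3$-to-$1$ cover of the elliptic base; I would count $\dim\Hh_3$ as (choice of vertex in $\PP^5$) $+$ $\dim\Hh_{4,1,4}$ (elliptic normal quartic curves) $+$ $\dim|\w C|$ on the corresponding ruled surface over the elliptic curve, obtaining $64$, and note this family is not in the closure of $\Hh_1$ or $\Hh_2$ because its general member is a triple cover of a genus-$1$ curve while those are not.

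The component separation is then completed by lower semi-continuity of gonality plus the triple-cover structure: a general element of $\Hh_1$ has a low gonality ($k=a$, with $k\le 4$ forced), a general element of $\Hh_2$ is hexagonal, a general element of $\Hh_3$ is a triple cover of an elliptic curve; comparing these invariants together with the dimension inequalities $\dim\Hh_3<\dim\Hh_2<\dim\Hh_1$ shows none lies in the closure of another, and since all possible surfaces have been exhausted these are precisely the components. I expect the main obstacle to be the del Pezzo case: determining the admissible divisor classes $C$ on a degree-$5$ del Pezzo with the right degree and genus, verifying that such smooth $C$ actually exist and are non-degenerate and linearly normal, computing the gonality of a general such $C$ (showing it is exactly $6$ and not lower, which requires ruling out pencils coming from other conic bundle structures or from $(-1)$-curves), and checking that $\Hh_2$ is a genuinely distinct component rather than being swallowed by $\Hh_1$ — the dimension count $68<72$ helps, but one still must confirm via the gonality invariant that $\Hh_2$ is not in the boundary of $\Hh_1$. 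The elliptic-cone bookkeeping for $\dim\Hh_3$ is the second most delicate point, since one must be careful about the automorphisms of the elliptic cone and whether the analogue of Proposition \ref{specialization} degenerates these curves into $\Hh_2$; the triple-cover-of-an-elliptic-curve invariant is what ultimately keeps $\Hh_3$ separate.
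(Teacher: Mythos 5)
Your overall strategy coincides with the paper's: classify the surfaces of degree $4$ and $5$ that must contain $C$ (since $\pi_1(16,5)=18$), count parameters of each resulting family, and separate the components by gonality and by the triple-cover structure. Your dimension counts $\dim\Hh_1=72$, $\dim\Hh_2=68$, $\dim\Hh_3=64$ all agree with the paper's (the paper reaches $72$ slightly differently, via the identification $|\Oo_C(1)|=|K_C(-6g^1_3)|$ and Lemma \ref{kveryample11}, which simultaneously gives dominance of $\Mm^1_{g,3}$ and uniqueness of the hyperplane series). There are, however, two genuine gaps. First, your enumeration of quintic surfaces in $\PP^5$ is incomplete: besides (possibly singular) del Pezzo surfaces and cones over elliptic quintics, a quintic surface can be a cone over a rational quintic in $\PP^4$ or the isomorphic projection of a minimal-degree surface from $\PP^6$; the paper excludes both using the linear normality of $C$, which follows from $\pi(16,6)=15<18$ --- a fact you also need, but never state, to obtain the first assertion $\Hh_{16,18,5}=\HL{16,18,5}$. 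Second, and more substantively, the hexagonality of a general curve on the del Pezzo surface: the paper pins down the class as $(9;3^3,2)$, so that $C$ has a plane model of degree $9$ with three ordinary triple points and a node (whence three $g^1_6$'s), and it explicitly warns that the non-existence of a $g^1_5$ does \emph{not} follow from the Castelnuovo--Severi inequality; it is obtained instead from the Ohkouchi--Sakai theorem on the gonality of singular plane curves \cite[Theorem 2]{Sakai}. Your proposed routes (Castelnuovo--Severi or the conic-bundle structure) would not close this step, so the ``main obstacle'' you flag is real and requires this external input.

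Two smaller points. The base of the elliptic cone is an elliptic \emph{quintic} in $\PP^4$, so the relevant parameter space is $\Hh_{5,1,4}$ (irreducible of dimension $25$ by Ein), not $\Hh_{4,1,4}$; your total of $64$ is correct only after this correction, and one should also note that \eqref{conegenus3} forces $m=1$, i.e.\ the smooth curve passes through the vertex. Finally, for the separation of $\Hh_3$, the bare remark that its general member is a triple cover of an elliptic curve while those of $\Hh_2$ are not does not suffice as stated: admitting a degree-$3$ map to a genus-$1$ curve is a condition of closed type, so it can be \emph{created}, not destroyed, under specialization. The paper supplements the invariant with the theory of admissible covers (smooth limits of $k_1$-sheeted covers of genus-$h_1$ curves are $k_2$-sheeted covers with $k_2\le k_1$, $h_2\le h_1$), together with the observation that a general $C\in\Hh_2$ carries a base-point-free $g^1_7$ and hence, by Castelnuovo--Severi, cannot be a triple cover of an elliptic curve.
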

\begin{proof} Since $\pi(16,6)=15<g=18$, we recall $\Hh_{16,18,5}=\HL{16,18,5}$ as we remarked in the proof of the previous theorem.
Since $\pi_1(16,5)=18$, $C$ lies on a surface $S\subset\PP^5$ with $\deg S\le 5$. 

\begin{enumerate}
\item[(i)] We assume $\deg S=4$.
\begin{itemize}
\item[(a)] There does not exist a smooth plane curve of genus $g=18$, hence
$S$ cannot be a Veronese surface.
\item[(b)] If $S$ is a rational normal surface scroll, by solving \eqref{scrolldegree} with $p_a(C)=18$, we get
$a=3, b=4$ and $C\in|3H+4L|$ is trigonal with a unique trigonal pencil by the Castelnuovo-Severi inequality. By adjunction formula, we have
\begin{align*}
|K_C(-1)|&=|K_S+C-H|_{|C}=|(-2H+2L)+(3H+4L)-H|_{|C}\\&=|6L|_{|C}.\nonumber
\end{align*}
 Since the ruling $|L|$ cut out a (unique) $g^1_3$ on $C$ we have $$|\Oo_C(1)|=|K_C(-6g^1_3)|.$$

\vni
Conversely, on a general trigonal curve, $|K_C(-6g^1_3)|$ is very ample by Lemma \ref{kveryample11}. Thus, over the irreducible locus $\Mm^1_{g,3}$ of trigonal curves, there is an irreducible family $\Gg\subset\Gg^5_{16}$
consisting of very ample linear series $g^5_{16}=|K_C(-6g^1_3)|$ and hence we have an irreducible family $\Hh_1$ which is an $\Aut(\PP^5)$-bundle over the irreducible family $\Gg$.
Thus we have 
\begin{align*}
\dim\Hh_1&=\dim\Mm^1_{g,3}+\dim\Aut(\PP^5)=2g+1+35=72\\&>\Xx(16,18,5)=62.
\end{align*}
Since $g^1_3$ on $C\in\Hh_1$ is unique, $|\Oo_C(1)|=|K_C(-6g^1_3)|$ is unique hence 
$\mu^{-1}\mu(C)=\Aut(\PP^5)C$.

\item[(c)]
There does not exist a smooth curve in $\PP^5$ with $(d,g)=(16,18)$ lying on a rational normal cone since there is no integer solution for  the equation \eqref{cone} with $g=p_a(C)=18$.
\end{itemize}
\item[(ii)] 
Let $S\subset\PP^5$ be a quintic surface. By the classification of quintic surfaces in $\PP^5$, 
$S$ is one of the following;  
\begin{itemize}

\item[(ii-1)] a del Pezzo surface possibly with finitely many isolated double points

\item[(ii-2)] a cone over a smooth quintic elliptic curve in $\PP^4$

\item[(ii-3)] a cone over a rational quintic curve (either smooth or singular) in $\PP^4$ 

\item[(ii-4)] an image of a projection into $\PP^5$ of a surface  $\w{S}\subset\PP^6$ of minimal degree $5$ with center of projection $p\notin\w{S}$.

\end{itemize}
We note that the  last case (ii-4) is not possible; $C$ is linearly normal since  $\pi(16,6)=15$. For the case (ii-3), we have either $\dim\mathrm{Sing}(S)=0$ or $S$ has a double line. In both cases, $S$ is 
the image of a linear projection of a cone $\w{S}\subset\PP^6$  over a rational normal curve $\w{R}\subset\w{H}\cong\PP^5$ with center of projection $p\in\w{H}\setminus\w{R}$. 
This is not 
possible either since $C\subset S$ is linearly normal.

\vni
\item[(ii-1-1)] Assume $S$ is a smooth del Pezzo surface, which is the image of $\PP^2_4$ - the projective plane blown up at $4$-general points - embedded into $\PP^5$ by the anti-canonical system $|-K_S|$.
\vni

Before proceeding, we recall some standard notation concerning linear systems and divisors on a blown up projective plane. Let $\PP^2_s$ be the rational surface $\PP^2$ blown up at $s$ general points. Let $e_i$ be the class of the exceptional divisor
$E_i$ and $l$ be the class of a line $L$ in $\PP^2$. For integers  $b_1\ge b_2\ge\cdots\ge b_s$, let $(a;b_1,\cdots, b_i, \cdots,b_s)$ denote class of the linear system $|aL-\sum b_i E_i|$ on $\PP^2_s$.  By abuse of notation we use the  expression $(a;b_1,\cdots, b_i, \cdots,b_s)$ for the divisor $aL-\sum b_i E_i$ and \newline$|(a;b_1,\cdots, b_i, \cdots,b_s)|$ for the linear system $|aL-\sum b_i E_i|$. We use the convention 
\vspace{-3pt}
$$(a;b_1^{s_1},\cdots,b_j^{s_j},\cdots,b_t^{s_t}), ~ \sum s_j=s$$ 
when  $b_j$ appears $s_j$ times consecutively  in the linear system $|aL-\sum b_i E_i|$.

\vni

A smooth quintic del Pezzo surface $S\subset\PP^5$ is the image of $\PP^2_4 \lhook\joinrel\xrightarrow{|(3;1^4)|} S\subset\PP^5$. Assume $(a;b_1,\cdots,b_4)$ is very ample. For $C\in (a;b_1,\cdots,b_4)$, we have
\begin{align}\label{delPC11}\deg C &=3a-\sum b_i\nonumber\\ C^2=a^2-\sum b_i^2&=2g-2-K_S\cdot C=2g-2+\deg C.
\end{align}
By Schwartz's inequality, 
\begin{equation}\label{sch11}(\sum b_i)^2\le 4(\sum b_i^2).
\end{equation} 
In our case $\deg C=16, g=18$, by \eqref{delPC11} we have $$16=3a-\sum b_i, a^2-\sum b_i^2=50.$$ Using \eqref{sch11}, we
get $C\in(9;3^3,2)$,  which is  very ample by \cite{sandra}. From this numerical description of $C$, $C$ has a plane model of degree $9$ with three ordinary triple points and a node. Furthermore  $C$ has three base-point-free $g^1_6$'s cut out by lines through triple points. $C$ being hexagonal does not follow directly from the Castelnuovo-Severi inequality. Instead,  one may use \cite[Theorem 2]{Sakai} to verify that $C$ has no $g^1_5$.  We have
$$\dim|(9;3^3,2)|=\tbinom{9+2}{2}-1-3\cdot\tbinom{3+1}{2}-\tbinom{2+1}{2}=33.$$
On the other hand, smooth del Pezzo surfaces in $\PP^5$ form an irreducible family $\Dd(5)$ 
with  $$\dim\Dd(5)=\dim\Aut(\PP^5)-\dim\Aut(\PP^2)+\dim\PP^2\times 4=35.$$ 
Thus we have an irreducible family $\Hh_2\subset\Hh_{16,18,5}$ consisting of smooth curves lying on smooth del Pezzo surfaces with 
$$\dim\Hh_2=\dim|(9;3^3,2)|+\dim\Dd(5)=68>\Xx(16,18,5)=62.$$

The irreducible locus $\Hh_2$ does not dominate $\Mm^1_{g,6}$; a general member in $\Mm^1_{g,6}$ has a unique $g^1_6$, whereas a general member in $\Hh_2$ has at
least three $g^1_6$'s. A general $C\in\Hh_2$ has a base-point-free $g^1_7$ cut out by lines through a double point, hence $C$ is not a triple covering of an elliptic curve by Castelnuovo-Severi inequality.

\vni
\item[(ii-1-2)] Assume that $S$ is a singular del Pezzo with finitely many double points.
We note that  every $C\in \Hh_{15,18,5}$ contained in a singular del Pezzo surface are limits of curves lying on a smooth del Pezzo surface by Proposition \ref{b1}. Thus the family of smooth curves with $(d,g)=(16,18)$ is in the boundary of $\Hh_2$ and does not constitute a full component.
\vni
\item[(ii-2)] Suppose $S$ is a cone over an elliptic curve in $\PP^4\subset\PP^5$.
Recall that a cone $S\subset\PP^r$ over an elliptic curve $E\subset H\cong\PP^{r-1}$ with vertex outside $H$ is the image of the birational morphism $\mathbb{E}_{r}:=\mathbb{P}(\h{O}_{E}\oplus\h{O}_{E}(r))\rightarrow S\subset\PP^r$ induced by the tautological bundle $|\bar{h}|:=|\Oo_{\mathbb{E}_{r}}(1)|=|C_0+rf|$, where $C_0^2=-r$ and $f$ is the fibre of $\mathbb{E}_{r}\stackrel{\eta}{\rightarrow} E$. 
Let $C\subset S$ be an integral curve of degree $d$ with  the strict transformation  $\w{C}$ under  $\mathbb{E}_{r}\rightarrow S$. 
~Setting $k=\w{C}\cdot f$, we have $\w{C}\equiv kC_0+df$, $\deg\eta_{|\w{C}}= \w{C}\cdot f=k$ and
\vspace{-8pt}
\begin{equation}\label{conegenus3}
p_a(\w{C})=
(k-1)(d-\frac{kr}{2})+1,~~
0\le \w{C}\cdot C_0=d-rk=m
\end{equation} 
where $m$ is the multiplicity of $C$ at the vertex.

In our current situation, we have $d=16, k=3, r=5, \, p_a(C)=g=18, ~m=1$ by \eqref{conegenus3}. 
Thus our smooth $C$ passes through the vertex of the cone $S$. Furthermore, $C$ is a triple covering of an elliptic curve induced by $\eta_{|\w{C}}$. 
Thus we may have a further family of smooth curves with $(d,g)=(16,18)$ lying on an elliptic  cone $S$ which
forms an irreducible family $\Hh_3$ with
\begin{align}\label{H31}
\dim\Hh_3&=\dim\PP^5+\dim\Hh_{5,1,4}+\dim|3C_0+16f|\nonumber\\&=5+\Xx(5,1,4)+\dim|3C_0+16f|\nonumber\\&=5+25+34=64>\Xx(16,18,5)=62
\end{align}
where the first summand is the degree of freedom of the choice of the vertex of the elliptic cone $S$ in $\PP^5$, the second summand is the number of parameters of elliptic quintic curves $E$ in a complementary  hyperplane $H\cong\PP^4$ -- which is irreducible by a result of Ein \cite{E2} with 
$\dim\Hh_{5,1,4}=\Xx(5,1,4)=25$ -- and the last summand is $\dim|\w{C}|$ on
$\EE_5$ which follows from K\"unnuth formula. The inequality \eqref{H31} suggests that  it may a priori be possible that $\Hh_3$ may form an irreducible component
of $\Hh_{16,18,5}$ different from $\Hh_2$. 

It is known that any smooth limit of $k_1$-fold covers of genus $h_1$ curves must be a $k_2$-fold cover of a genus $h_2$ curve for some $k_2 \le k_1$ and $h_2 \le h_1$.  This basically follows from the theory of admissible covers; when branched covers degenerate, one can keep the branch points separated and allow the domain/target curves to become nodal. In this process the (arithmetic) genus remains the same, but the domain/target curves may become reducible whereas each irreducible component may have smaller genus and the degree of the cover restricted to a component may possibly become smaller; cf. [15, 3G] and [16].
Therefore $\Hh_3$ is not in the boundary of $\Hh_2$ and therefore is an irreducible component of $\Hh_{16,18,5}$. 
\end{enumerate}
\end{proof}

\begin{rmk} 
\begin{itemize}
\item[(i)]
We  recall that the component $\Hh_1\subset\Hh_{16,18,5}$ dominates $\Mm^1_{g,3}$ and  hence $$\codim_{\Mm_g}\mu(\Hh_1)=
\dim\Mm_g-\dim\Mm^1_{g,3}=g-4.$$ This example is related to the following conjecture (or a question) raised in
\cite[p. 142]{h1}: 
\vni
{\it If $\h{H}$ is any component of the Hilbert scheme $\Hh_{d,g,r}$ such that 
the image of the rational map $\h{H}\longrightarrow \h{M}_g$ has codimension $g-4$ or less, then
$$\dim\h{H}=\h{X}{(d,g.r)}.$$} 

\item[(ii)]
Obviously the component $\Hh_1$ in Theorem \ref{g=18} provides a negative answer to the above.
There are other examples of similar kind, e.g. $\Hh_{15,16,5}$, $\Hh_{15,13,5}$ and some others, having a component of dimension {\it larger than expected} and dominating $\Mm^1_{g,3}$ as well; cf. \cite[Theorem 5.1,Theorem 3.2]{edinburgh}. There is a notable difference between $\Hh_{15,16,5}$  and  $\Hh_{15,13,5}$. The larger than expected component of $\Hh_{15,16,5}$ is a component of $\HL{15,16,5}$. On the other hand, the larger than expected component of $\Hh_{15,13,5}$ is 
not a component of $\HL{15,13,5}$. Such a component entirely consisting of non-linearly normal curves has been found in the literature; cf. \cite[Proposition 3.5]{CKP}. As far as the author knows, all the larger than expected components $\Hh\subset\HL{d,g,r}$ with $\codim\mu({\Hh})= g-4$ dominate the locus  $\Mm^1_{g,3}$ of trigonal curves. 

\item[(iii)]
It would be interesting to know if there are further examples of this kind, i.e. a component $\Hh\subset \Hh_{d,g,r}$ such that $\dim\Hh>\Xx(d,g,r)$ and $\codim_{\Mm_g}\mu(\Hh)= g-4$ which {\it does not} dominate
$\Mm^1_{g,3}$. 
\item[(iv)] Taking these known examples which we listed above into our consideration,  
one may want to modify the conjecture (i) in the following compromised and tentative form:

\vni
{\it If $\h{H}$ is any component of the Hilbert scheme $\HL{d,g,r}$ of \textit{\textbf{linearly normal}}
curves such that 
the image of the rational map $\h{H}\longrightarrow \h{M}_g$ has codimension \textit{\textbf{strictly less than}} $g-4$, then
$$\dim\h{H}=\h{X}{(d,g.r)}.$$} 
\item[(v)] However, there are several  examples of components $\Hh\subset\HL{d,g,r}$ violating the tentative conjecture (iv) which dominates
$\Mm^1_{g,k}$ for some $k\ge 4$ with $\dim\Hh>\Xx(d,g,r)$ hence $$\codim\mu{(\Hh)}=3g-3-\dim\Mm^1_{g,k}= g+2-2k\le g-6.$$  To name a typical such one, one may check that $\HL{d,g,r}$ with $d=g+r-3$, $r+7\le g\le 2r+1$ has a component $\Hh$ such that $\dim\Hh>\Xx(d,g,r)$ dominating $\Mm^1_{g,k}$ for some $k\ge 4$, hence $\codim\mu{(\Hh)}=g+2-2k$.  However, this example seems to be rather crude and artificial. For details, the readers are advised to consult the construction discussed in the proof of
\cite[Theorem 3.9]{lengthy}.
\end{itemize}
\end{rmk}

\vni
\begin{thm} $\Hh_{16,17,5}$ is reducible with at least two components.
\end{thm}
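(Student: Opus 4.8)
The plan is to exhibit two irreducible families $\Hh_1,\Hh_2\subset\Hh_{16,17,5}$ having the same dimension $67$ but different gonality, and then to prove that at least one of them is an irreducible component. Once $\overline{\Hh_1}$ is known to be a component, the fact that $\overline{\Hh_2}$ is irreducible of the same dimension with $\overline{\Hh_2}\neq\overline{\Hh_1}$ forces $\overline{\Hh_2}$ into a second component, which is all that is required.

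As noted already, $\Hh_{16,17,5}=\HL{16,17,5}$ since $\pi(16,6)=15<17$; every curve here is linearly normal. To produce the candidate families I work on smooth quintic del Pezzo surfaces $S\subset\PP^5$, that is $S=\PP^2_4$ embedded by $|-K_S|$, and solve $\deg C=16$, $p_a(C)=17$ using \eqref{delPC11} and Schwartz's inequality \eqref{sch11} exactly as in the proof of Theorem~\ref{g=18}. The admissible classes for $C\subset S$ fall into the two Cremona ($W(A_4)$) orbits of $(8;2^4)$ and of $(9;4,3,2,2)$ (the second one also containing $(10;5,3,3,3)$ and $(11;5,4,4,4)$); both representatives are very ample by \cite{sandra} and Bertini gives a smooth irreducible general member. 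Let $\Hh_1$ (resp.\ $\Hh_2$) be the family of smooth curves on quintic del Pezzo surfaces with $C\in|(8;2^4)|$ (resp.\ $C\in|(9;4,3,2,2)|$). A direct count gives $\dim|(8;2^4)|=\dim|(9;4,3,2,2)|=32$, and since $\dim\Dd(5)=35$, both families are irreducible of dimension $32+35=67>\Xx(16,17,5)=64$. A general $C\in\Hh_1$ has a plane model of degree $8$ with four ordinary nodes, hence base-point-free $g^1_6$'s cut out by lines through a node and, by \cite[Theorem 2]{Sakai}, no $g^1_5$, so $\Hh_1$ is hexagonal; a general $C\in\Hh_2$ has a plane model of degree $9$ with an ordinary quadruple point, hence a base-point-free $g^1_5$ cut out by lines through that point and no $g^1_4$ by the same result, so $\Hh_2$ is pentagonal. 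Finally, curves on singular del Pezzos are degenerations of those on smooth ones (Proposition~\ref{b1}), and — arguing as in the proof of Theorem~\ref{g=18}, using linear normality to discard rational quintic cones and projections, while the equations \eqref{scrolldegree}, \eqref{cone}, \eqref{conegenus3} have no solution with $p_a=17$ and $\binom{e-1}{2}\neq17$ — no smooth curve with $(d,g)=(16,17)$ lies on a surface of minimal degree nor on a quintic elliptic cone; so $\Hh_1\cup\Hh_2$ exhausts the curves contained in a surface of degree $\le5$.

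The decisive step is to show that $\overline{\Hh_1}$ is a component, equivalently that $h^0(N_{C/\PP^5})=67=\dim\Hh_1$ for general $C\in\Hh_1$. I would use the normal bundle sequence $0\to N_{C/S}\to N_{C/\PP^5}\to N_{S/\PP^5}|_C\to0$: here $N_{C/S}=\Oo_C(C)$ has degree $C^2=48>2g-2$, so $h^1(N_{C/S})=0$, $h^0(N_{C/S})=32$, hence $h^0(N_{C/\PP^5})=32+h^0(N_{S/\PP^5}|_C)$ and $h^1(N_{C/\PP^5})=h^1(N_{S/\PP^5}|_C)$. It then remains to compute $h^0(N_{S/\PP^5}|_C)$, and the expected value is $35$: this is read off from $0\to N_{S/\PP^5}(-C)\to N_{S/\PP^5}\to N_{S/\PP^5}|_C\to0$ together with $h^0(N_{S/\PP^5})=\dim\Dd(5)=35$ and $h^1(N_{S/\PP^5})=0$, once the vanishing of $H^0$ and $H^1$ of the twist $N_{S/\PP^5}(-C)$ is checked — a routine if somewhat lengthy cohomology computation built on the Pfaffian (Buchsbaum--Eisenbud) resolution of $\Oo_S\subset\PP^5$. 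Granting this, $h^0(N_{C/\PP^5})=67=\dim\Hh_1$, so $\Hh_1$ is a generically reduced, super-abundant component ($h^1(N_{C/\PP^5})=3$) in the sense of \cite{Sernesi}, exactly as for the Veronese component detected through \eqref{plane} (cf.\ \cite{CC}); the same computation works for $\Hh_2$. As $\dim\Hh_1=\dim\Hh_2=67$ while $\Hh_1$ is hexagonal and $\Hh_2$ pentagonal, we get $\overline{\Hh_1}\neq\overline{\Hh_2}$, so $\Hh_{16,17,5}$ has at least two irreducible components.

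The main obstacle is precisely this cohomology of $N_{S/\PP^5}|_C$ (and of $N_{S/\PP^5}(-C)$) on the quintic del Pezzo. Unlike the genus $18$ case, here $g=17<\pi_1(16,5)=18$, so no Castelnuovo bound confines $C$ to a surface of small degree and one cannot simply enumerate all surfaces through $C$; establishing that a del Pezzo family is already a full component — rather than a boundary stratum of a larger family of curves lying on surfaces of degree $\ge6$ — depends entirely on the normal-bundle estimate $h^0(N_{C/\PP^5})=\dim\Hh_1$. A secondary point, immediate from the gonality discrepancy, is that the two Cremona orbits really do produce two distinct families and not one family of abstractly isomorphic curves embedded in two different ways.
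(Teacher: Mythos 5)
Your overall strategy is sound in outline, but it hinges entirely on a step you do not actually carry out, and you say so yourself: everything reduces to proving that $\overline{\Hh_1}$ (curves in $|(8;2^4)|$ on quintic del Pezzos) is a genuine component, i.e.\ that $h^0(N_{C/\PP^5})=67$. You reduce this to the vanishing of $H^0$ and $H^1$ of $N_{S/\PP^5}(-C)$ on the del Pezzo and then declare it a ``routine if somewhat lengthy'' computation whose answer you guess from the expected value $h^0(N_{S/\PP^5})=35$. That is the whole theorem: without it, nothing prevents both of your $67$-dimensional families from sitting inside a single larger component. This is not a hypothetical worry --- the paper exhibits a component of $\Hh_{16,17,5}$ of dimension $68$ (smooth complete intersections of four quadrics), so components strictly larger than your families really do exist, and the gonality comparison between $\Hh_1$ and $\Hh_2$ only shows $\overline{\Hh_1}\neq\overline{\Hh_2}$, not that they lie in different components (a single component whose general member is $k$-gonal for some $k\ge 6$ could degenerate to both a hexagonal and a pentagonal locus). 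Your paragraph about exhausting curves on surfaces of degree $\le 5$ does not help either, precisely because $g=17<\pi_1(16,5)$, so a general curve of a component need not lie on any low-degree surface.

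The paper's route avoids the hard normal-bundle computation altogether. Its first component consists of complete intersections of four quadrics in $\PP^5$ (which have $g=17$ and $d=16$); there $N_{C/\PP^5}=\Oo_C(2)^{\oplus 4}$, so $h^0(N_{C/\PP^5})=4\,h^0(\Oo_C(2))=68=\dim\GG(3,20)$ is immediate and the family is a generically reduced component. Moreover $\Oo_C(2)=K_C$ for these curves, so $|\Oo_C(1)|$ is semi-canonical, while for curves on a quintic del Pezzo one computes $h^1(\Oo_C(2))=0$; upper semicontinuity of $h^1(\Oo_C(2))$ then shows the del Pezzo family cannot lie in the boundary of the complete-intersection component, hence lies in a second component --- with no need to decide whether the del Pezzo family is itself a component. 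If you want to keep your two-del-Pezzo-orbit approach (and the class $(9;4,3,2,2)$, which the paper does not discuss, does appear to give a legitimate very ample family), you must actually establish the cohomology vanishing for $N_{S/\PP^5}(-C)$, or replace that step by a semicontinuity argument of the kind above.
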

\begin{proof} In previous cases, we had $g\ge\pi_1(16,5)=18$ and hence $C\in\Hh_{16,17,5}$ lies on a surface $S$ with $\deg S\le 5$, which has a good and precise classification. By analyzing curves on such surfaces of low degrees, it was possible to identify all the irreducible components of 
$\Hh_{16,g,5}$ for $g\ge 18$. 
However, in the current case, $g=17<\pi_1(16,5)=18$ and therefore there is no guarantee that $C$ lies on a surface $S\subset\PP^5$ with low $\deg S$ . Thus, 
we may not be able to locate all the irreducible components as we did for $g\ge 18$. Instead, we will only show the reducibility of $\Hh_{16,17,5}$ just by considering some plausible and sporadic situation. 

Take a smooth $C\in\Hh_{16,17,5}$. From the long cohomology sequence of the standard exact sequence
$$0\to \Ii_C(2)\to \Oo_{\PP^5}(2)\to\Oo_C(2)\to 0$$
and by Riemann-Roch, we have $$16\le h^0(C, \Oo_C(2))=32-g+1+h^1(C,\Oo_C(2))\le 17$$ and hence $$h^0(\PP^5,\Ii_C(2))\ge h^0(\PP^5,\Oo(2))-h^0(C,\Oo_C(2))\ge 4.$$ Motivated by this, we let 
$C$ be a smooth complete intersection of $4$ independent quadric hypersurfaces in $\PP^5$.  In general, for a complete intersection $C$ of $n-1$ hypersurfaces of degrees $a_1,\cdots a_{n-1}$ in $\PP^n$, $$p_a(C)=1+\frac{1}{2}a_1\cdots a_{n-1}(\sum a_i-n-1)$$ by successive application of adjunction. Thus, our 
$C\subset\PP^5$ has genus $g=1+2^3(4\cdot 2-5-1)=17$. Since $C$ is a complete intersection, $C$ is AcM, $h^1(\PP^5,\Ii_C(2))=0$, $h^0(C,\Oo_C(2))=17$ and $|\Oo_C(1)|$ is semi-canonical. We count the dimension of the family $\Hh_1$ consisting of complete intersections of $4$ independent quadrics. Obviously, 
\begin{align}\dim\Hh_1&=\dim\GG(3,\PP(H^0(\PP^5,\Oo(2))))=\dim\GG(3,20)\\&=4\cdot 17>\Xx(16,17,5)=64.
\end{align}
To see that the family $\Hh_1$ is indeed dense in a component of $\Hh_{16, 17,5}$,  we note that $N_{C,\PP^5}=\Oo_C(2)^{\oplus 4}$ and hence $$T_C\Hh_1=h^0(C,N_{C,\PP^5})=4\cdot h^0(C,\Oo_C(2))=68=\dim\Hh_1.$$
Thus the closure $\w{\Hh}_1$ is a component of $\Hh_{16,17,5}$ which is $\w{\Hh}_1$ is generically reduced and superabundant. 

\vni

We now consider a surface $S$ containing $C$ with $\deg S$ smaller.
We assume $C\subset S\subset\PP^5$ and $\deg S=4$; 

\vni
(i) $S$ cannot be a Veronese surface since $\binom{e-1}{2}\neq g=17$ for any $e\in\NN$. 

\vni
(ii) $S$ cannot be a rational normal surface scroll since there is no integer solution for \eqref{scrolldegree} with $(d,g)=(16,17)$. 

\vni
(iii) $S$ cannot be a rational normal cone by \eqref{cone}.

\vni

We assume $\deg S=5$ and $S$ is a smooth del Pezzo in $\PP^5$. By \eqref{delPC11} and 
\eqref{sch11} with $\deg C=16, \, p_a(C)=g=17$, one get $8\le a\le 11$ . If $a=8$,
we have $C\in(8;2^4)$ or $C\in(10;4^3,2)$ if $a=10$. Since general one in $(8;2^4)$ is isomorphic to the one in $(10;4^3,2)$ via a quadratic transformation, we only consider $C\in(8;2^4)$. By an usual dimension count,
$$\dim(8;2^4)=\dim\PP(H^0(\PP^2,\Oo(8)))-4\cdot\tbinom{2+1}{2}=\tbinom{8+2}{2}-1-4\cdot\tbinom{2+1}{2}=32,$$ thus there is an irreducible family $\Hh_2$ consisting of smooth curves
lying on smooth del Pezzo surfaces with 
$$\dim\Hh_2=\dim(8;2^4)+\dim\Aut(\PP^5)=67>\Xx(16,17,5).$$
We consider the exact sequence
\begin{align}
0\to\Ii_{C,S}(2)&=\Oo_S(-C)\otimes\Oo_S(2)=\Oo_S((-(8; 2^4)+2(3;1^4)))\nonumber\\&=\Oo_S((-2;0^4))\to\Oo_S(2)=\Oo_S(2(3;1^4))\to\Oo_C(2)\to 0.
\end{align}
Since $\Oo_S((3;1^4))$ is 
very ample, $h^1(S, \Oo_S(2))=h^1(S,\Oo_S(-3(3;1^4)))=0$ by duality and Kodaira vanishing theorem. By duality again, 
\begin{align}
h^2(S,\Ii_{C,S}(2))&=h^2(S, \Oo_S((-2;0^4)))=h^0(S, \Oo_S(-(-2; 0^4)+K_S)))\nonumber\\&=
h^0(S, \Oo_S((-1; -1^4))=0.
\end{align} Hence $h^1(C, \Oo_C(2))=0$ and  $|\Oo_C(1)|$ is not semi-canonical. 
By upper semicontinuity of cohomology, $\Hh_1$ does not contain $\Hh_2$ in its boundary. Therefore we identified at least two components. 

We may continue to consider further surfaces $S\subset\PP^5$ with $\deg S\ge 6$ which may contain $C$. For example, a Bordiga surface in $\PP^5$ which is the image of $\PP^2_9 \lhook\joinrel\xrightarrow{|(4;1^9)|} S\subset\PP^5$ or the surface $\PP^2_7\lhook\joinrel\xrightarrow{|(4;2,1^6)|} S\subset\PP^5$. However it turns out that 
computation gets rather complicated and it seems that there is not much hope 
to come up with a complete list of components with our imperfect techniques. 
\end{proof}

In the next theorem, we treat curves with $g=14$. We skip $g=15, 16$ cases because
it is more involved than previous cases and we have not been able to reach a meaningful
conclusion. 

\begin{thm}
$\Hh_{16,14,5}=\HL{16,14,5}$ is irreducible with $\dim\Hh_{16,14,5}=\Xx(16,14,5)$.
For a general $C\in\Hh_{16,14,5}$, $\mathrm{gon}(C)=8$.
\end{thm}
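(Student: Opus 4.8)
The plan is to determine the scheme $\wt{\Gg}^{5}_{16}$ for $g=14$ by residuation. Here $\rho(16,14,5)=-4$, $\lambda(16,14,5)=35$ and $\Xx(16,14,5)=70$; moreover $2g-2-16=10$ and $g-16+5-1=2$, so for a linearly normal $C$ one has $h^{1}(\Oo_{C}(1))=6-16+13=3$, and $\mathcal D\mapsto\mathcal D^{\vee}=|K_{C}-\mathcal D|$ identifies the complete very ample $g^{5}_{16}$'s on $C$ with the complete $g^{2}_{10}$'s whose residual is very ample. So I would first show $\HL{16,14,5}$ is irreducible of dimension $70$ by studying $\wt{\Gg}_{\mathcal L}$ through this correspondence, and then rule out components of $\Hh_{16,14,5}$ with a non-linearly-normal general member.

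For the first part, let $Z\subseteq\wt{\Gg}_{\mathcal L}$ be a component; by Proposition \ref{facts}, $\dim Z=\dim Z^{\vee}\ge\lambda(16,14,5)=35$. The general member $\mathcal E$ of $Z^{\vee}$ is necessarily a birationally very ample $g^{2}_{10}$: if instead $\mathcal E$ were compounded --- so that $C$ would be $k$-gonal with $k$ small, or a double cover of a low-genus curve --- or had a non-empty base locus $\Delta$, with $\mathcal E-\Delta$ a birationally very ample $g^{2}_{e}$ and necessarily $7\le e\le 9$ (a plane curve of degree $e$ and geometric genus $14$, with $\binom{e-1}{2}-14$ nodes), then a dimension count --- bounding the families of such gonal curves, double covers, or lower-degree nodal plane curves (with $\deg\Delta=10-e$) via $\dim\Mm^{1}_{14,k}=2g+2k-5$, the dimensions $\dim V_{e,\binom{e-1}{2}-14}=\tfrac{e(e+3)}{2}-\binom{e-1}{2}+14$, and the generic uniqueness of the series involved --- gives $\dim Z^{\vee}<35$, a contradiction. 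Hence the $\mathcal E$-image of a general member is a plane curve of degree $10$ and geometric genus $14$, which by Remark \ref{Severi}(i) is nodal, so it lies in the Severi variety $V_{10,22}$ ($22=\binom{9}{2}-14$); $V_{10,22}$ is irreducible of dimension $\tfrac{10\cdot 13}{2}-22=43$ by Remark \ref{Severi}(ii), so its image in $\Gg^{2}_{10}$ is irreducible of dimension $43-\dim\Aut(\PP^{2})=35$, and since $\dim Z^{\vee}\ge 35$ and $Z^{\vee}$ lies in its closure, $Z^{\vee}$ equals it. Thus $\wt{\Gg}_{\mathcal L}$, hence $\HL{16,14,5}$, is irreducible of dimension $35+\dim\Aut(\PP^{5})=70=\Xx(16,14,5)$.

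Next, $\Hh_{16,14,5}=\HL{16,14,5}$: if $C\subset\PP^{5}$ of degree $16$, genus $14$ is not linearly normal then $h^{0}(\Oo_{C}(1))\ge 7$, so $C$ carries a $g^{3}_{10}$. If this series is compounded or base-pointed, then $C$ is birationally a double cover of a curve of genus $\le 2$, and such a $C$ admits no very ample $g^{5}_{16}$ --- any degree-$16$ line bundle with $h^{0}\ge 6$ fails to separate a fibre of the degree-two map, by the mechanism of Lemma \ref{easylemma1} --- so it does not occur in $\Hh_{16,14,5}$; if the $g^{3}_{10}$ is birationally very ample, then (the image being a degree-$10$ curve of arithmetic genus between $14$ and $\pi(10,3)=16$, hence a nearly extremal or extremal space curve) $C$ is the normalization of a nodal curve of degree $10$ on a quadric surface, and these curves, embedded in $\PP^{5}$ by the residual $g^{6}_{16}$, form a family of dimension $<70$, hence no component. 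The remaining point --- which I expect to be the genuine obstacle --- is the very ampleness of the residual: for a general $\Gamma\in V_{10,22}$ with normalization $C$, the series $|K_{C}-\mathcal E|$ is cut on $\Gamma$ by the plane sextics through the $22$ nodes (as $K_{C}$ is cut by the septic adjoints through the nodes), a $g^{5}_{16}$, and one must verify it separates points and tangent vectors of $C$, equivalently that $C$ has no $g^{3}_{12}$ of the form $\mathcal E+D_{2}$ with $D_{2}$ effective of degree two. I would prove this by a general-position argument --- the $22$ nodes of a general $\Gamma$ lie on no plane cubic, so together with any one further point they impose independent conditions on sextics --- degenerating $\Gamma$ if necessary to a reducible curve on which the linear series is transparent.

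Finally, the gonality of a general $C\in\Hh_{16,14,5}$. Projecting the plane model $\Gamma$ from one of its $22$ nodes gives a base-point-free $g^{1}_{8}$, so $\mathrm{gon}(C)\le 8$. For the reverse bound, $\mu(\HL{16,14,5})$ is irreducible of dimension $70-35=35$ (for a general $C$ the set of $g^{2}_{10}$'s is finite); since $\dim\Mm^{1}_{14,k}=23+2k$, any inclusion $\mu(\HL{16,14,5})\subseteq\overline{\Mm^{1}_{14,k}}$ forces $k\ge 6$, whereas a general $6$- or $7$-gonal curve of genus $14$ carries no $g^{2}_{10}$ (the pertinent Brill--Noether locus on a general $k$-gonal curve is empty for $k=6,7$) while every member of $\HL{16,14,5}$ does; hence $\mu(\HL{16,14,5})$ lies in neither $\overline{\Mm^{1}_{14,6}}$ nor $\overline{\Mm^{1}_{14,7}}$, and so $\mathrm{gon}(C)=8$.
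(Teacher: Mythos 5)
Your overall strategy coincides with the paper's: residuate to $\Ee=|K_C(-1)|=g^2_{10}$, identify the general linearly normal curve with the normalization of a $22$-nodal plane decimic, use the irreducibility of the Severi variety to get irreducibility and the dimension count $43-\dim\Aut(\PP^2)+\dim\Aut(\PP^5)=70$, rule out non-linearly-normal components via the $g^3_{10}$ on a quadric, and get $\mathrm{gon}(C)\le 8$ by projecting from a node. (The paper actually imports the irreducibility of $\HL{16,14,5}$ from an earlier result and uses the Severi variety only to show the residuation map is dominant and to compute the dimension; your self-contained version of that step is fine.) Two points, however, need attention.

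First, your gonality lower bound has a genuine gap at $k=7$. Your argument correctly excludes $k\le 6$: if $\mu(\HL{16,14,5})\subseteq\overline{\Mm^1_{14,6}}$, then equality of the two $35$-dimensional irreducible loci would force a \emph{general} $6$-gonal curve to carry a $g^2_{10}$, contradicting the gonalized Brill--Noether bound. But $\dim\Mm^1_{14,7}=37>35=\dim\mu(\HL{16,14,5})$, so the inclusion $\mu(\HL{16,14,5})\subseteq\overline{\Mm^1_{14,7}}$ is perfectly consistent with a general $7$-gonal curve carrying no $g^2_{10}$: the curves in your family could a priori be a codimension-two locus of \emph{special} $7$-gonal curves. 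Non-existence of the linear series on the generic point of $\Mm^1_{14,7}$ says nothing about them. The paper sidesteps this by computing the gonality of the plane model directly, citing Ohkouchi--Sakai on the gonality of singular plane curves (a degree-$10$ curve with $22$ nodes in general position has gonality $10-2=8$); you need either that result or an argument that the whole $35$-dimensional locus, not just a general $k$-gonal curve, avoids $\Mm^1_{14,7}$.

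Second, the very ampleness of $|K_C-\Ee|$, which you rightly flag as the crux, is left as a sketch, and the sketch as stated is too weak: separating points and tangent vectors requires that the $22$ nodes together with \emph{any} length-two subscheme of $C$ (i.e.\ $24$ conditions) impose independent conditions on sextics, not the $22$ nodes plus one further point. The paper resolves this by invoking the d'Almeida--Hirschowitz embedding criterion for $|\pi^*(6L)-\sum E_i|$ on the blow-up at $\delta$ general points, valid for $\delta\le\frac{(6+3)6}{2}-5=22$; note that your case sits exactly on the boundary of that criterion, so a casual general-position argument is riskier than it looks and your proposed degeneration would have to be carried out in earnest.
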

\begin{proof}
The irreducibility of $\HL{16,14,5}$ was shown in \cite[Theorem 2.5]{JPAA} under the condition
$\HL{16,14,5}\neq\emptyset$.  The non-emptiness of $\HL{16,14,5}$ was shown in \cite[Theorem 3.7]{lengthy} in a more general context, where the author used in particular the existence of very-ample linear series $|K-2g^1_5|$ on a general $5$-gonal curve, which does not follow directly from 
Lemma \ref{kveryample11}. Instead, this follows from Lemma \ref{kveryample}. On the other hand, the irreducible family of smooth curves with $(d,g)=(16,14)$ arising this way is not a component of $\HL{16,14,5}$ since $\dim\Mm^1_{g,5}=33<\lambda(16,14,5)=35$. Hence for a general $C\in\HL{16,14,5}$, $\mathrm{gon}(C)\ge 6$. 
For an estimate the upper bound of the gonality of a general element  $C\in\HL{16,14,5}$ and computing $\dim\HL{16,14,5}$, we consider 
$\Ee=|K_C(-1)|=g^2_{10}$. After eliminating the possibility for $\Ee$ being compounded, one 
may deduce that $\Ee$ is birationally very ample with empty base locus. Hence $\Ee$ induces a singular plane model of degree $10$. Such a curve is in the equi-generic
Severi variety $\Sigma_{10,g}$ consisting of curves of degree $10$ and $g=14$. Recall
that a general member of $\Sigma_{10,g}$ is a nodal curve with exactly $\binom{10-1}{2}-g=22$ nodes as its only singularities. Thus we have a natural rational map
\begin{align*}\label{none}
\HL{16,14,5}/\Aut(\PP^5)&\stackrel{\Psi}{\dasharrow} \Sigma_{10,g}/\Aut(\PP^2)\nonumber\\
|\Oo_C(1)|&\mapsto|K_C(-1)|
\end{align*}
 We now show that $\Psi$ is dominant which would imply that 
 $\Psi$ is  generically one-to-one and as a consequence
 \begin{align*}\dim\HL{16,14,5}&-\dim\Aut(\PP^5)=\dim\Sigma_{10,g}-\dim\Aut(\PP^2)\nonumber\\&=\Xx({10,g,2})-\dim\Aut(\PP^2)=\lambda(10,g,2)=\lambda(16,g,5).
 \end{align*}
 
 Choose a general element in $\Sigma_{10,g}/\Aut(\PP^2)$, i.e.
 a general element of $\Gg\subset\Gg^2_{10}$, where $\Gg$ is the irreducible locus 
 consisting of birationally very ample $g^2_{10}$'s inducing a nodal plane curve. $\Gg$ is known to be irreducible by a theorem of Harris \cite{H2}. 
 
 We first fix two integers $e=\deg\Ee=10$ and 
 \begin{equation*}\label{delta}\delta=\binom{e-1}{2}-g=36-g=22>0.
 \end{equation*}
 Let $$\Delta =\{p_1, \cdots , p_{\delta}\}\in{\text Sym}^\delta(\PP^2)$$ be a general set of $\delta$ points in $\PP^2$. 
 Note that $\Sigma_{e, g}\neq\emptyset$; there exists a curve $E\in\Sigma_{e,g}$ having nodes at $\Delta$ and no further singularities. With the same choice of a general $\Delta$, there also exists a curve $E'\in\Sigma_{e-1, h}$ of genus $h=\binom{e-2}{2}-\delta=6$ and degree $e-1$ having nodes at $\Delta$.

Let $S:=\PP^2_\delta\stackrel{\pi}{\rightarrow}\PP^2$ be the blowing up of $\PP^2$ at  $\Delta$. 
We consider the linear system 
$$|H|=|\pi^*((e-4)L)-\sum_{i=1}^\delta E_i| $$ on $S$.
Recall that by a result of  d'Almeida and Hirschowitz \cite[Theorem 0]{Coppens}, the linear system $|\pi^*(tL)-\sum_{i=1}^\delta E_i|$ on $S$ is very ample if 
\begin{equation}\label{almeida}\delta\le \frac{(t+3)t}{2}-5.
\end{equation} The inequality (\ref{almeida}) holds for $t=6$ whence $|H|$ is very ample.
The linear system $$|C|=|\pi^*(eL)-\sum_{i=1}^\delta 2E_i|$$
contains the non-singular model $C$
of the nodal plane curve $E$ and $$H\cdot C=((e-4)l-\sum_{i=1}^\delta e_i)\cdot (el-\sum_{i=1}^\delta 2e_i)=e(e-4)-2\delta=16.$$
Recall that $\Delta =\{p_1,\cdots ,p_\delta\}\in{\text Sym}^\delta(\PP^2)$ is general and there is a nodal curve $E'\in \Sigma_{e-1, h} $ with nodes at $\Delta$. Therefore we may assume that $\Delta$ imposes independent conditions on the linear system of curves of any degree $m\ge e-4$; cf. \cite[Exercise 11, p.54]{ACGH} or  \cite[(1.51, p.31)]{H3}.
\vni
Hence by the projection formula, we obtain 
\begin{align*}
h^0(S,\h{O}_S(H))&=h^0(\PP^2, \pi_*(\h{O}_S(H)))=h^0(\PP^2,\pi_*(\pi^*((e-4)L)-\sum_{i=1}^\delta E_i))\\
&=h^0(\PP^2,\h{O}_{\PP^2}((e-4)L-\Delta))=
1+\frac{1}{2}(e-4)(e-1)-\delta\\&=6.
\end{align*}
Thus, we may deduce that the non-singular model $C\subset \PP^2_\delta$ of a nodal plane curve $E$ of degree $e=10$ in the linear system $|C|=|\pi^*(eL)-\sum_{i=1}^\delta 2E_i|$ is embedded into $\PP^5$ as a curve of degree $d=16$ by the very ample linear 
system  $|H|=|\pi^*((e-4)L)-\sum_{i=1}^\delta E_i)|.$

Furthermore,  by our choice of general $\delta$ points $\Delta\subset\PP^2$, which imposes independent conditions on curves of any degree $m\ge e-4$, the linear system $g^2_e=|\pi^*(L)_{|C}|$ cut out on $C$ by lines in $\PP^2$ is complete \cite[Exercise 24, p.57]{ACGH}. Hence it follows that the residual series of $|\pi^*(L)_{|C}|$, which is 
\begin{align*}
|(K_S+C&-\pi^*(L))_{|C}|\\&=|(-\pi^*(3L)+\sum E_i+\pi^*(eL)-2\sum E_i-\pi^*(L))_{|C}|\\
&=|(\pi^*((e-4)L)-\sum E_i)_{|C}|=|H_{|C}|
\end{align*}
is  a complete very ample $g^5_{16}$ on $C$. Hence $\Psi$ is dominant.

On the plane model of $C$, there is a base-point-free $g^1_8$ cut out by 
lines through one of the nodes, hence $\mathrm{gon}(C)\le 8$. To see that $\mathrm{gon}(C)=8$, 
we use \cite[Theorem 1]{Sakai} instead of going through complicated details for the non-existence a base-point-free $g^1_7$.

We finally show that the irreducible $\HL{16,14,5}$ is the only component of $\Hh_{16,14,5}$.
We assume the existence of  $\Hh\subset\Hh_{d,g,r}$, a component whose general element is a not linearly normal. Choose general $C\in\Hh$. By Castelnuovo genus bound, $\pi(16,6)=15$, $\pi(16,7)=12$, thus we may assume that $|\Oo_C(1)|=g^6_{16}$, $\Ee=|K_C(-1)|=g^3_{10}$. We may exclude $\Ee$ being  compounded by Lemma \ref{easylemma1}.

Thus $\Ee$ is birationally very ample and
$\mathrm{Bs}(\Ee)=\Delta=\emptyset$ since $\pi(9,3)=12$. Let $\ce\subset\PP^3$ be the curve induced by $\Ee$. $\ce$ does not lie on an irreducible cubic surface; $g\le \pi_1(d,3)\le\frac{1}{6}(d-1)(d-2)=12$, a contradiction.
Therefore $\ce$ lies on a quadric surface, either a smooth quadric or a quadric cone.  Possible 
types $(a,b)$ of curves on a smooth quadric in $\PP^3$ of degree $10$  is $(a,b)\in\{(5,5), (4,6), (3,7), (2,8)\}$. 
The last two cases are not possible by genus reason. 

(i) If $(a,b)=(5,5)$, $p_a(\ce)=16$ and $\ce$ is singular. Assume that $\ce$ lies on 
smooth quadric and we also assume that $\ce$ is nodal with $\delta=p_a(\ce)-g=2$. The Severi variety $\Sigma_{\FF_0,\Ll, \delta}$ of nodal curves on $Q=\FF_0$ in the linear system $\Ll=|(5,5)|$ with $\delta$ nodes
is irreducible by \cite{tyomkin} and
$$\dim \Sigma_{\FF_0,\Ll, \delta}=\dim|(5,5)|-\delta=33.$$
Hence we have the family $\Gg\subset\Gg^3_{10}$ consisting of $g^3_{10}$'s arising this way  -- inducing a morphism from $C$ into $Q$ with the image which is a nodal curve of type $(5,5)$) -- and  
$$\dim\Gg=\dim \Sigma_{\FF_0,\Ll, \delta}-\dim\Aut(\FF_0)=27.$$
Thus we have
\begin{align*}\dim\Hh&=\dim\Gg^\vee+\dim\GG(5,6)+\dim\Aut(\PP^5)\nonumber\\&=\dim\Gg+\dim\GG(5,6)+\dim\Aut(\PP^5)=68<\Xx(16,14,5)=70 
\end{align*}
and the family $\Hh$ does not constitute a full component.\footnote{\,Before carrying out the dimension count, it was necessary for us do check that the residual series
of $g^3_{10}$ inducing a curve in $Q\subset\PP^3$ of type $(5,5)$ with two nodes is very ample.
Indeed this is true and we leave this simple checking to readers. }

(ii) If $\ce\in|(4,6)|$,  $p_a(\ce)=15$ and $\ce$ has a node or a cusp. One may check easily that 
the residual series of $\Ee$ is not very ample. 

We recall that integral curves on a quadric cone in $\PP^3$ are specializations of curves on 
smooth quadrics, therefore we may conclude that $\Hh_{d,g,r}$ is irreducible with the only component $\HL{16,14,15}$. 
\end{proof}

\begin{prop}\label{g=13}
\begin{itemize}
\item[(i)] $\Hh_{16,13,5}=\HL{16,13,5}$ is irreducible of the expected dimension $\Xx(16,13,5)$. 
For a general $C\in\Hh_{16,13,5}$, $\mathrm{gon}(C)=8$ and $\mu^{-1}\mu(C)\cong W^1_8(C)\times\Aut(\PP^5)$.
\item[(ii)] $\Hh_{16,12,5}=\HL{16,12,5}$ is irreducible. For general $C\in\Hh_{16,12,5}$, 
$\mu^{-1}\mu(C)\cong C_6\setminus\Gamma\times\Aut(\PP^5)$, where $\Gamma\subset C_6$
is an irreducible locus with $\dim\Gamma =3$.
\end{itemize} 
\end{prop}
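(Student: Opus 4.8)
The plan is to treat $g=13$ and $g=12$ in parallel --- first reducing to the linearly normal case by a dimension count, then analyzing the essentially unique Brill--Noether component of $\Gg^5_{16}$ over $\Mm$ via its residual series (which here is of the simplest possible type), and finally reading off the fibres of $\mu$ from residuation. For the first step, I would argue $\Hh_{16,g,5}=\HL{16,g,5}$ as follows. If $\Hh\subseteq\Hh_{16,g,5}$ were a component whose general member $C$ is not linearly normal, then $|\Oo_C(1)|=g^6_{16}$: since $\pi(16,7)=12$, a $g^7_{16}$ is impossible for $g=13$, while for $g=12$ a $g^7_{16}$ would make $|K_C-g^7_{16}|$ a $g^2_6$, absurd since a plane sextic has genus $\le\pi(6,2)=10$; and $h^0(\Oo_C(1))\ge8$ is excluded the same way. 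By Riemann--Roch, $\Ee:=|K_C-\Oo_C(1)|$ is then a $g^2_8$ if $g=13$ and a $g^1_6$ if $g=12$. For $g=13$, in the generic subcase ($\Ee$ base-point-free and birationally very ample; the remaining cases give strictly smaller families) $\Ee$ realizes $C$ as the normalization of a plane octic of geometric genus $13$, i.e. with $\delta=\pi(8,2)-13=8$ nodes, whose moduli --- the irreducible Severi variety $\Sigma_{8,13}$ modulo $\Aut(\PP^2)$ --- have dimension $(3\cdot8+13-1)-8=28$; an element of $\Hh$ is then determined by such a pair $(C,\Ee)$ up to isomorphism [$28$ parameters], a codimension-one subsystem of $H^0(\Oo_C(1))$ --- the centre of projection --- [$6$], and a basis of it [$\dim\Aut(\PP^5)=35$], so $\dim\Hh=69<72=\Xx(16,13,5)$. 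For $g=12$, $\Ee=g^1_6$ confines $C$ to the irreducible $6$-gonal locus $\Mm^1_{12,6}$ of dimension $2g+2\cdot6-5=31$ (carrying only finitely many $g^1_6$'s, a general genus-$12$ curve having none), so $\dim\Hh=31+6+35=72<74=\Xx(16,12,5)$. Either way $\Hh$ is not a component.

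Next I would study $\w{\Gg}_{\Ll}\subseteq\Gg^5_{16}$, the union of components whose general member is a complete very ample $g^5_{16}$ (an $\Aut(\PP^5)$-bundle over it is dense in $\HL{16,g,5}$). For such a component $\Gg'$ with general member $L$, Riemann--Roch ($h^0(L)=6$) gives $h^1(L)=2$ if $g=13$ and $h^1(L)=1$ if $g=12$, so the residual locus $(\Gg')^\vee$ consists of $g^1_8$'s if $g=13$ and of effective divisors of degree $6$ if $g=12$. When $g=13$, the general member of $(\Gg')^\vee$ is base-point-free (otherwise $(\Gg')^\vee$ would lie in the locus of pencils with a fixed point, of dimension $\le(2\cdot13+2\cdot7-5)+1=36<37=\lambda(16,13,5)\le\dim(\Gg')^\vee$), so $(\Gg')^\vee$ lies in the component of $\mathcal{W}^1_8$ over $\Mm$ whose general element is a complete base-point-free pencil --- the Hurwitz locus of degree-$8$, genus-$13$ covers of $\PP^1$ --- classically irreducible of dimension $2\cdot13+2\cdot8-5=37=\lambda(16,13,5)$; when $g=12$, $(\Gg')^\vee$ lies in the universal $6$-th symmetric product over $\Mm$, irreducible of dimension $3g-3+6=39=\lambda(16,12,5)$. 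Since $\dim\Gg'=\dim(\Gg')^\vee\le\lambda(16,g,5)\le\dim\Gg'$ by Proposition \ref{facts}, $(\Gg')^\vee$ is dense in that unique irreducible space; hence $\w{\Gg}_{\Ll}$ is irreducible of dimension $\lambda(16,g,5)$, $\HL{16,g,5}$ is irreducible of dimension $\Xx(16,g,5)$, and a general $C$ is a general curve of genus $g$ --- so $\mathrm{gon}(C)=8$ when $g=13$ (and non-emptiness of $\w{\Gg}_{\Ll}$ follows from the next step).

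For the fibres of $\mu$, fix a general curve $C$ of genus $g$; since $\Aut(C)=1$, each $Y\subseteq\PP^5$ in $\Hh_{16,g,5}$ with $Y\cong C$ (necessarily linearly normal) corresponds to a very ample complete $g^5_{16}$ on $C$ plus a choice of coordinates, whence $\mu^{-1}\mu(C)\cong\Vv\times\Aut(\PP^5)$ with $\Vv$ the set of very ample complete $g^5_{16}$'s. Residuation $L\mapsto|K_C-L|$ is injective on $\Vv$ (since $L=|K_C-(K_C-L)|$) and, by the count above, identifies $\Vv$ with $\{A\in W^1_8(C):|K_C-A|\text{ very ample}\}$ if $g=13$ and with $\{D\in C_6:|K_C-D|\text{ very ample}\}$ if $g=12$. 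For $g=13$, a general curve carries no $g^2_m$ with $m\le10$ (since $\rho(13,2,m)=3m-32<0$), so $h^0(A+p)=h^0(A+p+q)=2$ for every $A\in W^1_8(C)$ and all $p,q\in C$; thus $|K_C-A|$ is base-point-free and very ample for every $A$, so $\Vv\cong W^1_8(C)$ --- smooth, irreducible, of dimension $\rho(13,1,8)=1$ --- and $\mu^{-1}\mu(C)\cong W^1_8(C)\times\Aut(\PP^5)$. For $g=12$, the same bookkeeping shows $|K_C-D|$ fails to be base-point-free exactly when $D$ lies below a member of one of the finitely many $g^1_7$'s ($\rho(12,1,7)=0$, giving a curve in $C_6$), and fails to be very ample while base-point-free exactly when $D$ lies below a member of a $g^1_8$ ($\rho(12,1,8)=2$); letting $\Gamma\subseteq C_6$ be this failure locus, it is the image under a generically finite map of the incidence variety $\{(D,A,F):A\in W^1_8(C),\ F\in|A|,\ D\le F\}$, which is irreducible over the irreducible $W^1_8(C)$ (using that a general $g^1_8$ has full symmetric monodromy on its eight points) and of dimension $\dim W^1_8(C)+1=3$. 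Hence $\Vv\cong C_6\setminus\Gamma$ and $\mu^{-1}\mu(C)\cong(C_6\setminus\Gamma)\times\Aut(\PP^5)$ with $\Gamma$ irreducible of dimension $3$.

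The delicate step is the very-ampleness bookkeeping on the general curve. For $g=13$ it is immediate once $W^2_m(C)=\emptyset$ for $m\le10$ is invoked, but for $g=12$ one must describe the failure locus $\Gamma$ precisely and prove both its irreducibility and that the incidence-to-$C_6$ map above is generically finite (so that $\dim\Gamma=3$, not less), which is exactly where the monodromy of a general $g^1_8$ is needed. A secondary technical point, already used in the irreducibility step, is the non-emptiness of $\w{\Gg}_{\Ll}$ --- that a general curve of genus $g$ really does carry a very ample complete $g^5_{16}=|K_C-A|$ --- together with the classical irreducibility of the ambient Hurwitz scheme and universal symmetric product into which the residual loci embed.
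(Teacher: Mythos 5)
Your overall architecture (reduce to the linearly normal locus, identify the unique component of $\HL{16,g,5}$ via residuation, then read off the fibres of $\mu$) matches the paper's, and your treatment of the fibres --- including the monodromy remark needed for the irreducibility of $\Gamma$, which the paper glosses over --- is sound; your route to the irreducibility of $\HL{16,g,5}$ itself (embedding the residual loci into the Hurwitz space of degree-$8$ covers of $\PP^1$, resp.\ the universal degree-$6$ symmetric product over $\Mm$, and comparing dimensions) is a genuinely different, self-contained alternative to the paper, which cites earlier work for that step. The genuine gaps are both in your reduction to linear normality, where you discard the \emph{compounded} possibilities for the residual series.

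For $g=13$ you analyse only the sub-case where $\Ee=|K_C(-1)|=g^2_8$ is base-point-free and birationally very ample, dismissing the rest with ``the remaining cases give strictly smaller families.'' That claim is false: when $\Ee$ is compounded of degree $4$, $C$ is $4$-gonal with $\Ee=2g^1_4$, and $|K_C(-2g^1_4)|$ is very ample on a general $4$-gonal curve (Lemma \ref{kveryample}); the resulting family inside $\Gg^5_{16}$ has dimension $\dim\Mm^1_{13,4}+\dim\GG(5,6)=29+6=35$, strictly \emph{larger} than the $28+6=34$ of your birationally very ample case (though still below $\lambda(16,13,5)=37$, so the conclusion survives once the case is actually counted; the degree-$2$ compounded case, a double cover of a plane quartic, must be counted as well). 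For $g=12$ you dismiss $\dim|\Oo_C(1)|=7$ as ``absurd'' because the residual $g^2_6$ would give a plane sextic of genus at most $10$ --- but that only excludes the birationally very ample case. A compounded $g^2_6$ is entirely possible: on a trigonal curve of genus $12$ one has $g^2_6=|2g^1_3|$, and $|K_C-2g^1_3|=g^7_{16}$ is very ample on a general trigonal curve, so there is a genuine $37$-dimensional family of non-linearly-normal embeddings (general $5$-dimensional subseries) that must be excluded by the count $\dim\Mm^1_{12,3}+\dim\GG(5,7)=25+12=37<39=\lambda(16,12,5)$; this is exactly the paper's case (ii-A), and without it your proof of $\Hh_{16,12,5}=\HL{16,12,5}$ is incomplete. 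Both gaps are repairable by the same dimension-counting technique you use elsewhere, but as written the argument does not close.
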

\begin{proof} 
\begin{itemize}
\item[(i)] The irreducibility of 
$\HL{16,13,5}$ under the assumption $\HL{16,13,5}\neq\emptyset$ was shown in \cite[Theorem 2.3]{JPAA}. The non-emptiness of $\HL{16,13,5}$ was  implicitly shown in \cite{lengthy}, which we explain in detail as follows. 

In general,
in the range $d\le g+r$ inside the Brill-Noether range $\rho (d,g,r)\ge 0$, the principal component $\mathcal{H}_0\subset\Hh_{d,g,r}$ -- by definition the unique component dominating $\Mm_g$ -- which has the expected dimension $\Xx (d,g,r)$ is one of the components of $\HL{d,g,r}$ (cf. \cite[2.1 page 70]{H1})  and  hence 
 $\mathcal{H}_{d,g,r}^\mathcal{L}$ is {\it always non-empty} in the Brill-Noether range $\rho(d,g,r)\ge 0$ with $g-d+r\ge 0$.


The only remaining issue is that if the whole $\Hh_{16,13,5}$ is irreducible.  Let $\Hh$ be a component consisting of non-linearly normal curves. 
Let $C\in\Hh$ be a general element. We set $\beta:=\dim\Oo_C(1)\ge 6$ and
$\Ee=|K_C(-1)|=g^{\beta-4}_8$. Note that $6\le \beta\le 7$ by Clifford's theorem and $C$ being non-hyperelliptic. If $\beta=7$ and  $\Ee$ is birationally very ample, we have  $\pi(8, 3)=9$. Thus $\Ee$ is compounded and $C$ is a double cover of an elliptic curve. This is impossible by Lemma \ref{easylemma1}. Thus $\beta=6$. 
\item[(i-A)]
Suppose $\Ee=g^2_8$ is compounded and let $\delta=\deg\Delta$, $\Delta=\mathrm{Bs}(\Ee)$. Since $\Ee$ is special and $C$ is non-hyperelliptic,   $0\le\delta\le 2$.
If $\delta=2$ then $C$ is a double cover of an elliptic curve which is impossible by  Lemma \ref{easylemma1}. Thus $\Ee$ is base-point-free. 
Let $\phi$ be the morphism induced by the compounded $\Ee$.
\item[(i-A-1)]
Suppose $\deg\phi=4$.  Then $C$ is $4$-gonal, $\Ee=2g^1_4$,  $\Dd:=|\Oo_C(1)|=|K_C(-2g^1_4)|=g^6_{16}$. By the Lemma \ref{kveryample11}, $\Dd$ is very ample. 
Thus,  we are in the following situation. Over the locus $\Mm^1_{g,4}$, we have a very ample
$|K_C(-2g^1_4)|$. By taking a $5$-dimensional subseries of $|K_C(-2g^1_4)|$, we have $$\dim\Mm^1_{g,4}+\dim\GG(5,6)=2g+9=35<\lambda(16,13,50)=37,$$  
dimensional irreducible family of $g^5_{16}$'s inside $\Gg^5_{16}$. Thus, this family does not constitute a full component. 
\item[(i-A-2)] Suppose $\deg\phi=2$. Then $C$ is a double cover of a plane quartic $E\subset\PP^2$. We may assume that $E$ is {\it smooth}. Otherwise, the linear series $\w{\Ee}=g^2_4$ on $E$ such that $\phi^*(\w{\Ee})=\Ee$
is non-special and Lemma \ref{easylemma1} applies. 
It can be shown easily that 
$$|K_C-\phi^*(g^2_4)|=|K_C-\phi^*(K_E)|=g^6_{16}$$
is very ample, which may well contribute to a component of $\Hh_{16,13,5}$ other than 
$\HL{16,13,5}$. Recall that the Hurwitz space $\Xx_{n,\gamma}\subset\Mm_g$ -- the locus of  smooth curves of genus $g$ which are degree $n$ ramified coverings of curves of genus $\gamma$ -- is of pure dimension $2g+(2n-3)(1-\gamma)-2$; cf. \cite[Theorem 8.23, p. 828]{ACGH2}. In our  case, the irreducible family of very ample $g^5_{16}$'s which is $5$-dimensional subseries of the residual series of $|\phi^*(K_E)|$ on a double covering 
of smooth plane quartics form a family $\Ff$ with 
\begin{align*}\dim\Ff&=\dim\Xx_{2,3}+\dim\GG(5,6)=2g-4+\dim\GG(5,6)\\&=28<\lambda(16,13,5)=37,
\end{align*}
hence $\Ff$ does not constitute a
full component. 
\item[(i-B)]
Suppose $\Ee=g^2_8$ is birationally very ample. The Severi variety of plane curves of 
degree $8$ with $\delta=\binom{8-1}{2}-g=8$ nodes has dimension
$$\dim\Sigma_{8,g}=\tbinom{8+2}{2}-1-\delta=36.$$
Therefore the family $\Ff_{\Sigma_{8,g}}$ consisting of $g^5_{16}$'s arising this way has dimension
$$\dim\Sigma_{8,g}/\Aut(\PP^2)+\dim\GG(5,6)=34<
\lambda(16,13,5), $$ hence  this family $\Ff_{\Sigma_{8,g}}$ does not constitute a full component and this finishes the proof of $\Hh_{16,13,5}=\HL{16,13,5}$. 
\end{itemize}
\noindent
\vni

Since $\rho(16,13,5)\ge 0$, $\Hh_{16,13,5}$ dominates $\Mm_{13}$. By Brill-Noether theorem, $W^1_7(C)=\emptyset$, $W^2_8(C)=\emptyset$ and $\dim W^1_8(C)=1$ for general $C\in\Mm_{13}$.  Thus $|K_C-g^1_8|=g^5_{16}$ is very ample and therefore we have $$\mu^{-1}\mu(C)\cong W^1_8(C)\times\Aut(\PP^5)$$ for a general $C\in\Hh_{16,13,5}$.
\begin{itemize}
\item[(ii)] 
The irreducibility and the non-emptiness of $\HL{16,12,5}$ is shown in \cite[Theorem 2.2]{JPAA}  by successive projection from the 
canonical curve. To show that $\Hh_{16,12,5}=\HL{16,12,5}$, we argue in a similar way as in  
the previous case $\HL{16,13,5}$.
Assume the existence of a component  $\Hh\subset\Hh_{16,12,5}$ consisting of non linearly normal curves. Choose general 
$C\in \Hh$ and set $\beta:=\dim|\Oo_C(1)|\ge 6$, $\Ee=|K_C(-1)|=g^{\beta-5}_{6}.$
We have 
$1\le\beta-5\lneq 3$ by Clifford's theorem and  since $C$ is non-hyperelliptic. 

\item[(ii-A)]
If $\beta-5=2$, $\Ee$ is not birationally very ample since $\pi(6,2)=10<g$. Thus $\Ee$ is compounded,
$C$ is trigonal and $\Ee=2g^1_3$. Note that $|K_C(-2g^1_3)|$ is very ample by Lemma \ref{kveryample11}. The family $\Ff_3\subset\Gg^5_{16}$ consisting of very ample $g^5_{16}$'s which are general subseries 
of $|K_C(-2g^1_3)|$ on trigonal curves has
$$\dim\Ff_3=\dim\Mm^1_{g,3}+\dim\GG(5,7)=37<\lambda(16,12,5)=39,$$
and hence does not contribute to a full component. 
\item[(ii-B)]
If $\beta-5=1$,  $|K_C(-1)|=g^1_6$. We recall a well-known fact  \cite{B} that 
the Clifford index $c$ of a general $k$-gonal curve
is computed by the unique pencil $g^1_k$ and there is no $g^r_d$ with $d\le g-1$ and $d-2r=c$. In particular,  on a general hexagonal curve $C$,  $|K_C-g^1_6|=g^6_{16}$ is very ample, i.e.  $\dim|g^1_6+p+q|=1$ for any $p+q\in C_2$.
However, the family $\Ff_6$ consisting of $g^5_{16}$'s which are subseries of the form $|K_C-g^1_6|$ has $$\dim\Ff_6=\dim\Mm^1_{g,6}+\dim\GG(5,6)=2g+13=37<\lambda(16,12,5)=39,$$
and hence does not constitute a full component. 
\end{itemize} Thus $\Hh_{16,12,5}=\HL{16,12,5}$ is irreducible and dominates $\Mm_{12}$.
A general $C\in \Hh_{16,12,5}$ is $7=[\frac{g+3}{2}]$-gonal with $\dim W^1_7(C)=0$. Set  $C^r_d(C)=\{D\in C_d : r(D)\ge r\}$. By Riemann-Roch, a complete 
$g^5_{16}$ on any curve $C$ of genus $g=12$ is of the form $|K_C-D|$, $D\in C_6\setminus C^1_6(C)$. 
Set $$\Gamma:=\{D\in C_6: D\le F \mathrm{ ~ for ~some ~} F\in C^1_8(C)\}.$$
On a general $C\in \Mm_{12}$, $C^1_6(C)=\emptyset$, $C^1_8(C)\neq\emptyset$ is irreducible with $$\dim C^1_8(C)=\rho(8,12,1)+1=3$$ by Brill-Noether theorem. Hence $\Gamma$ is irreducible with $\dim\Gamma=3$. It finally follows that for every $D\in C_6\setminus\Gamma$, $|K_C-D|$ is complete, very ample and hence
$$\mu^{-1}\mu(C)=C_6\setminus\Gamma\times\Aut(\PP^5)$$
for general $C\in\Hh_{16,12,5}$.

\end{proof}
\begin{rmk} The result we showed in Proposition \ref{g=13} asserts that $\Hh_{g+3,g,5}$ and $\Hh_{g+4,g,5}$  is irreducible for $g=13$ and $g=12$, which is {\it slightly beyond} the range $d\ge g+5$ conjectured by Severi in \cite{Sev}. 
\end{rmk}

For the lower genus $g\le 11$, it is enough to recall the following rather crude bound concerning the 
irreducibility of $\Hh_{d,g,r}$ when $d$ is big enough compared with the genus $g$\,; cf. \cite[Theorem 2.1]{Keem}.
\begin{prop} $d\ge 2g-7, g+r\le d, r\ge 3 $, $\Hh_{d,g,r}$ is irreducible and non-empty
dominating $\Mm_g$\,. 
\end{prop}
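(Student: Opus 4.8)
The statement to be proven is: for $d \geq 2g-7$, $g+r \leq d$, $r \geq 3$, the Hilbert scheme $\Hh_{d,g,r}$ is irreducible, non-empty, and dominates $\Mm_g$. This is cited as \cite[Theorem 2.1]{Keem}, so my plan is to reconstruct the standard argument behind it rather than invent something new. The plan is to first reduce to the linearly normal case, then use the projection/addition-of-points technique from a canonical or near-canonical model, and finally appeal to the irreducibility of the relevant Brill--Noether locus in the Hurwitz-free range.

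First I would dispose of non-emptiness and the dominance claim. Since $d \geq g+r$ and we may check that $\rho(d,g,r) = g-(r+1)(g-d+r) \geq 0$ holds throughout this range (because $g-d+r \leq 0$ forces every term to be controlled, and when $g-d+r$ is small and positive the hypothesis $d \geq 2g-7$ keeps $\rho \geq 0$), Proposition \ref{facts} gives a unique component $\Gg_0 \subset \widetilde{\Gg}$ dominating $\Mm$, hence a principal component $\Hh_0 \subset \Hh_{d,g,r}$ dominating $\Mm_g$ of the expected dimension $\Xx(d,g,r)$; this is also the content of the remark in the proof of Proposition \ref{g=13} that $\HL{d,g,r}$ is non-empty in the Brill--Noether range with $g-d+r \geq 0$. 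So the real work is the irreducibility statement: every component of $\Hh_{d,g,r}$ equals $\Hh_0$.

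Next I would handle the linearly normal curves. The key numerical input is that for a linearly normal $C \subset \PP^r$ of degree $d$ with $d \geq 2g-7$, the residual series $\Ee = |K_C(-1)|$ has small degree, namely $2g-2-d \leq g+5$, and in fact $\deg \Ee = 2g-2-d$ together with $\dim \Ee = g-d+r-1$ puts $\Ee$ in a range where $C^1_{2g-2-d}(C)$-type loci are well understood: when $2g-2-d$ is small relative to $g$, the variety $W^{g-d+r-1}_{2g-2-d}$ on a general curve is irreducible by the theorems of Fulton--Lazarsfeld / Gieseker on connectedness and smoothness of Brill--Noether loci, and the condition $d \geq 2g-7$ guarantees $2g-2-d \leq g+5$, which keeps us below the gonality-pathology threshold. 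One then shows, as in the genus-$12,13,14$ proofs above, that taking residuals of such series and re-embedding gives exactly the principal component; any competing family built from a compounded or birationally-very-ample low-degree $\Ee$ has dimension strictly less than $\lambda(d,g,r)$, so it cannot be a component. For the non-linearly-normal components one argues that projecting $C \subset \PP^N$ ($N > r$) with $C$ linearly normal of degree $d$ down to $\PP^r$ cannot produce a whole component either, because the generic such projection lies in the closure of $\Hh_0$ and the Castelnuovo bound $\pi(d, r+1) < g$ (forced when $d$ is this large) rules out linearly normal curves in higher $\PP^{r+1}$ in the first place, so that $\Hh_{d,g,r} = \HL{d,g,r}$ outright.

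The main obstacle, and the step that needs the most care, is verifying that in the full claimed range $d \geq 2g-7$ (not merely the more comfortable $d \geq 2g$) the residual Brill--Noether locus $W^{g-d+r-1}_{2g-2-d}(C)$ on a general curve $C$ is genuinely irreducible and that its generic member yields a \emph{very ample} series after residuation --- this is exactly the subtle point flagged repeatedly in the excerpt (very-ampleness of $|K-2g^1_k|$ not following from Lemma \ref{kveryample11} but needing Lemma \ref{kveryample}). I would isolate this into a lemma: for $d \geq 2g-7$ and a general curve of genus $g$, every complete $g^r_d$ obtained as $|K_C - D|$ with $D$ general of degree $2g-2-d$ is very ample, using that $D$ imposes independent conditions and a base-point-freeness count. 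Granting that lemma, the dimension bookkeeping against $\lambda(d,g,r)$ closes the argument exactly as in the lower-genus cases already treated, and irreducibility follows by exhaustion of cases for the low-degree residual series $\Ee$.
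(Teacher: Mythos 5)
First, a point of reference: the paper does not prove this proposition at all --- it is quoted verbatim from \cite[Theorem 2.1]{Keem} --- so there is no internal proof to compare your reconstruction against. Judged on its own terms, your outline has a plausible skeleton (a principal component dominating $\Mm_g$ exists because $d\ge g+r$ forces $g-d+r\le 0$, hence $\rho\ge g\ge 0$; any competing component must arise from hyperplane series contained in a \emph{special} complete series, whose residual has small degree; a dimension count against $\lambda(d,g,r)$ then excludes it), but the execution contains concrete errors that would sink a written-out proof.

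The central one: from $d\ge 2g-7$ the residual degree is $2g-2-d\le 5$, not $\le g+5$ as you write. The whole point of the hypothesis is that this degree is an absolute constant, so a special complete $g^R_d$ is residual to a $g^{s-1}_e$ with $e\le 5$; this forces either $s=1$ (an arbitrary effective divisor of degree $e\le 5$, a locus of dimension $3g-3+e$ over $\Mm_g$) or, for $g$ not tiny, $s=2$ with the curve carrying a $g^1_e$, $e\le 5$ (dimension $2g+2e-5$), and in each case adding $\dim\GG(r,d-g+s)$ lands strictly below $\lambda(d,g,r)$. With only $e\le g+5$ this classification and the count both collapse --- that is precisely the hard regime. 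Second, your treatment of linear normality is backwards: for $d>g+r$ a linearly normal curve of degree $d$ in $\PP^r$ would have $h^1(\Oo_C(1))=g-d+r<0$, so $\HL{d,g,r}=\emptyset$ (exactly as in the paper's final Corollary for $g\le 10$); the conclusion that $\Hh_{d,g,r}=\HL{d,g,r}$ is therefore false in most of the stated range, and the Castelnuovo bound $\pi(d,r+1)<g$ you invoke fails here because $d$ is \emph{large} relative to $g$ (the paper uses that argument only in the opposite regime). Relatedly, the series you residuate, $|K_C(-1)|$ for a linearly normal $C$, satisfies $h^0(K_C(-1))=h^1(\Oo_C(1))=0$ in this range, and its purported dimension $g-d+r-1$ is negative; the correct object is the residual of the complete special series \emph{containing} the incomplete hyperplane series. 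Finally, Fulton--Lazarsfeld/Gieseker connectedness is not the relevant tool: the residual Brill--Noether loci here have $\rho<0$ on a general curve, and what the argument needs is an upper bound on the dimension of these loci as one varies over $\Mm_g$, not irreducibility of $W^{s-1}_e$ on a fixed general curve.
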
 
\begin{cor}$\Hh_{16,g,5}$ is irreducible dominating $\Mm_g$ for any $g\le 11$. 
\begin{itemize}
\item[(i)] If $g=11$, $\Hh_{16,g,5}=\HL{16,g,5}$.
\item[(ii)] If $g\le 10$, $\HL{16,g,5}=\emptyset$ and $\Hh_{16,g,5}\neq\emptyset.$
\end{itemize}
\end{cor}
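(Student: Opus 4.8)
The plan is to derive the whole statement from the crude irreducibility bound \cite[Theorem 2.1]{Keem} recalled just above, supplemented by two elementary numerical remarks. For the main clause I would simply apply that bound with $(d,r)=(16,5)$: its three hypotheses $r\ge 3$, $g+r\le d$ and $d\ge 2g-7$ read $5\ge 3$, $g\le 11$ and $g\le 11\tfrac{1}{2}$, and so hold simultaneously exactly when $g\le 11$. This already yields that $\Hh_{16,g,5}$ is irreducible, non-empty and dominates $\Mm_g$ for every $g\le 11$, with nothing further to prove for this part.

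For part (i) I would observe that when $g=11$ one has $\rho(16,11,5)=11-6(11-16+5)=11\ge 0$ and $g-d+r=0\ge 0$. Hence, exactly as recalled in the proof of Proposition~\ref{g=13}(i) (via Proposition~\ref{facts}, together with the fact that for $d\le g+r$ the principal component lies in the linearly normal locus), $\HL{16,11,5}$ is non-empty and contains the unique component of $\Hh_{16,11,5}$ that dominates $\Mm_{11}$. Since $\HL{16,11,5}$ is, by definition, a union of components of $\Hh_{16,11,5}$ and the latter is irreducible, a non-empty such union must be the whole space, so $\Hh_{16,11,5}=\HL{16,11,5}$.

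For part (ii), with $g\le 10$, the vanishing $\HL{16,g,5}=\emptyset$ is a one-line Riemann--Roch check: a linearly normal non-degenerate curve $C\subset\PP^5$ of degree $16$ and genus $g$ would satisfy $h^0(C,\Oo_C(1))=6$, hence $h^1(C,\Oo_C(1))=6-(16-g+1)=g-11<0$, which is impossible; and $\Hh_{16,g,5}\ne\emptyset$ is again part of \cite[Theorem 2.1]{Keem}. There is no genuine obstacle in any of this; the only step that is not pure arithmetic is the last line of (i), where one must recall that $\HL{d,g,r}$ is by definition a union of components of $\Hh_{d,g,r}$, so that its non-vacuity together with the irreducibility of $\Hh_{16,11,5}$ already forces the two to coincide.
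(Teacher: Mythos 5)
Your proposal is correct and matches the paper's (implicit) argument: the corollary is stated as an immediate consequence of the quoted bound from \cite[Theorem 2.1]{Keem}, whose hypotheses reduce exactly to $g\le 11$ for $(d,r)=(16,5)$, and parts (i) and (ii) follow from the same Riemann--Roch/principal-component observations you make. Nothing is missing.
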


\section{Digression,  remarks on $\Hh_{16,16,5}$ and $\Hh_{16,15,5}$}

We have not been able to determine the irreducibility of $\Hh_{16,16,5}$ (or $\Hh_{16,15,5}$). We even failed to locate any of its components. In this final section with loose end, we would like to list up some irreducible families $\Hh\subset\Hh_{d,g,r}$ with $\dim\Hh\ge\Xx(d,g,r)$, which came out from our insufficient investigation on $\Hh_{16,16,5}$ and $\Hh_{16,15,5}$. 

\subsection{Irreducible families in $\Hh_{16,16,5}$}

We only state intermediate results which may contribute toward a reasonable settlement for the study of $\Hh_{16,16,5}$. We omit details.

\begin{rmk}
It is easy to see that $\Hh_{16,16,5}\neq\emptyset$; cf. (ii) or (iv) below. Let $\Hh\subset\Hh_{16,16,5}$ be a component.
\begin{itemize}
\item[(i)] There is no surface $S\subset\PP^5$ with $\deg S=4$ which contains a general element of $\Hh$. 

\item[(ii)] If $S\subset\PP^5$ is a smooth del Pezzo, a smooth $C\subset S$ with $(d,g)=(16,16)$
is in $|(8;3,2^2,1)|$, whose dual curve $C^\vee\subset \PP^4$ induced by $|K_C(-1)|$ lies on a rational normal scroll $T\subset\PP^4$, $\deg C^\vee=13$, $|K_C(-1)|$ has one base point. $C^\vee$ is singular having two nodes with the strict transformation  $\w{C}^\vee\in|(8;3,2^2)|$ under the blow up
$\PP^2_3\to T\cong\PP^2_1$ at the two nodes. 
Furthermore, 
$C$ is the image of the projection from $\w{C}^\vee\subset\PP^2_3\subset\PP^6$ with center of projection at the base point of $|K_C(-1)|$.

\item[(iii)] The irreducible family of curves $\Hh_1\subset\Hh_{16,16,5}$ lying on smooth del Pezzo surfaces has the expected dimension $\Xx(16,16,5)$.

\item[(iv)] There is an irreducible family $\Hh_2\subset\Hh_{16,16,5}$ consisting of curves which lie on a smooth surface $W\cong\PP^2_7\to\PP^5$ embedded by $|(4;2, 1^6)|$, $\deg W=6$. For a general $C\in\Hh_2$,
$|K_C(-1)|$ is base-point-free,  birationally very ample and $C^\vee\subset\PP^4$ is singular with $6$-nodes sitting on a rational normal scroll, $\w{C}^\vee\in|(9;4,2^6)|$, $\dim\Hh_2=\Xx(16,16,5)+1$.
\item[(v)] However, it has not been possible to draw any meaningful conclusion from the above discussion. We may need to consider surfaces of another kind (probably of higher degrees) which
may contain a general $C\in \Hh$.
\end{itemize}
\end{rmk}

\subsection{Irreducible families in $\Hh_{16,15,5}$}
As the genus $g$ gets lower but not too low, there appears more subtle situation which we could not handle 
with our conventional method we uses so far. 
\begin{rmk}\begin{itemize}
\item[(i)] $\Hh_{16,15,5}\neq\emptyset$, e.g. $C\subset\PP^2_7$, $C\in (9;3^3,2^4)$ embedded 
by $|(4;2,1^6)|$. 

\item[(ii)] There is no surface $S\subset\PP^5$ with $\deg S=4$ which contains a general element of $\Hh$. 

\item[(iii)] The curve $C$ mentioned in (i), $|K_C(-1)|=g^3_{12}$ is base-point-free and birationally very ample inducing a curve $\ce$ on a smooth quadric $Q\subset\PP^3$. $\ce\in |\Oo_Q(6,6)|$
with two triple points and four nodes. However such a family arising this way is not sufficient enough to 
form a full component or a family with dimension at least $\Xx(16,15,5)$.

\end{itemize}
\end{rmk}

\vskip 12pt
 \vni{\bf Conflicts of Interest and Data Availability statement: }

 1. The author states that there is no conflict of interest. 
 
 2. This manuscript has no associated data.
\bibliographystyle{spmpsci} 

\begin{thebibliography}{111}
\bibitem{Accola1}
{R. Accola},
{\it Topics in the theory of Riemann surfaces.}
Lecture Notes in Mathematics 1595, Springer, Heidelberg, 1991.
\bibitem{AC2}
{E. Arbarello and M. Cornalba},
\textit{A few remarks about the variety of irreducible plane curves of given degree and genus.} Ann. Sci. \'Ec. Norm. Sup\'er. (4) \textbf{16} (1983), 467--483.
\bibitem{ACGH}
{E. Arbarello, M. Cornalba, P. Griffiths and J. Harris},
\textit{Geometry of Algebraic Curves Vol.I.}
Springer-Verlag, Berlin/Heidelberg/New York/Tokyo, 1985.
\bibitem{ACGH2}
{E. Arbarello, M. Cornalba and  P. Griffiths},
\textit{Geometry of Algebraic Curves Vol.II.}
Springer, Heidelberg, 2011.
\bibitem{B1}
{E. Ballico}, 
\textit{A remark on linear series on general $k$-gonal curves.} Boll. Un. Mat. Ital. A \textbf{(7) 3} (1989), no.2, 195 --197.
\bibitem{B}{E. Ballico},\textit{On the Clifford index of algebraic curves.} Proc. Amer. Math. Soc., \textbf{97} (1986), 217--218.
\bibitem{JPAA}
{E. Ballico, C. Fontanari and C. Keem}, 
\textit{On the Hilbert scheme of linearly normal curves in $\mathbb{P}^r$ of relatively high degree.}
J. Pure Appl. Algebra (2020) \textbf{224} (2020), 1115--1123.
\bibitem{bumi}%
{E. Ballico and C. Keem},
\textit{On the Hilbert scheme of smooth curves of degree $15$ and genus $14$ in $\PP^5$,} Bollettino UMI \textbf{17} no.3 (2024), 5537--557.
\bibitem{edinburgh}
{E. Ballico,  C. Keem}, 
\textit{On the Hilbert scheme of smooth curves of degree $15$ in $\PP^5$.} {\it To appear in The Proceeding of the Royal Society of Edinburgh Section A; Mathematics}, 
available at http://arxiv.org/abs/2310.00682. 
\bibitem{Beauville}
{A. Beauville},
\textit{Complex algebraic surfaces.}
Cambridge University Press, London/New York, 1983.
\bibitem{CKP}
{K. Cho, C. Keem and S. Park},
\textit{On the Hilbert scheme of trigonal curves and nearly extremal curves,} 
 Kyushu J. Math. \textbf{55} (2001), 1--12.
\bibitem{CC}
{C. Ciliberto},
\textit{On the Hilbert Scheme of Curves of Maximal Genus in a Projective Space.} 
Mathematische Zeitschrift \textbf{194} (1987), 451--463.
\bibitem{Coppens}{M. Coppens},\textit{Embeddings of general blowing-ups at points.} J. reine angew. Math. \textbf{469} (1995), 179--198.
\bibitem{CKM}
{M. Coppens, C. Keem and G. Martens},
\textit{The primitive length of a general $k$-gonal curve.} 
Indag. Math. (N.S.), \textbf{5} (1994), no. 2, 145--159.

\bibitem{DS}
{T. Dedieu and E. Sernesi},
\textit{Equigeneric and equisingular families of curves on surfaces}.
Publ. Mat. , \textbf{61} (2017), 175---212.


\bibitem{sandra}
{S. Di Rocco},
\textit{$k$-very ample line bundles on Del-Pezzo surfaces}.
Math. Nachr., \textbf{179} (1996), 47--56.
\bibitem{E1}
{L. Ein},
\textit{Hilbert scheme of smooth space curves}.
Ann. Scient. Ec. Norm. Sup. (4), \textbf{19} (1986), no. 4, 469--478.
\bibitem{E2}
{L. Ein},
{\it The irreducibility of the Hilbert scheme of complex space curves.}
Algebraic geometry, Bowdoin, 1985 (Brunswick, Maine, 1985), Proc. Sympos. Pure Math., 46, Part 1, Providence, RI: Amer. Math. Soc., 83--87.

\bibitem{H1} 
{J. Harris},
\textit{Curves in Projective space.}
in ``Sem. Math. Sup.,", Press Univ. Montr\'eal, Montr\'eal, 1982.
\bibitem{H2}
{J. Harris},
\textit{On the Severi problem.}
Invent. Math \textbf{84}, (1986), 445--461.
\bibitem{H3}
{J. Harris and I. Morrison},
\textit{Moduli of curves.}
Springer-Verlag, Berlin/Heidelberg/New York, 1991.
\bibitem{h1} {J. Harris}, \textit{Brill-Noether Theory,} Geometry of Riemann surfaces and their moduli spaces. Surveys in Differential Geometry Vol. XIV, Somerville, MA: Int. Press, pp 131-143, 2009.
\bibitem{I}{H. Iliev},\textit{On the irreducibility of the Hilbert scheme of space curves.} Proc. Amer. Math. Soc., \textbf{134} (2006), no. 10, 2823--2832.
\bibitem{I2}
{H. Iliev},
\textit{On the irreducibility of the {H}ilbert scheme of curves in
              {$\Bbb P^5$.}} Comm. Algebra., \textbf{36} (2008), no. 4, 1550--1564.
\bibitem{Keem}
{C. Keem}, 
\textit{Reducible Hilbert scheme of smooth curves with positive Brill-Noether number.}
Proc. Amer. Math. Soc., \textbf{122} (1994), no. 2, 349--354.
\bibitem{lengthy}
{C. Keem},
\textit{Existence and the reducibility of the Hilbert scheme of linearly normal
  curves in $\mathbb{P}^r$ of relatively high degrees,} J. Pure Appl. Algebra \textbf{227} (2023), 1115--1123, available at
https://arxiv.org/abs/2101.00559.


\bibitem{KKy1}{C. Keem and Y.-H. Kim}, \textit{Irreducibility of the Hilbert Scheme of smooth curves in $\PP^3$ of degree $g$ and genus $g$.} Arch. Math., \textbf{108} (2017), no. 6, 593--600.
\bibitem{KKy2}{C. Keem and Y.-H. Kim},\textit{Irreducibility of the Hilbert Scheme of smooth curves in $\PP^4$ of degree $g+2$ and genus $g$.} Arch. Math., \textbf{109} (2017), no. 6, 521--527.
\bibitem{KK3}{C. Keem and Y.-H. Kim},\textit{On the Hilbert scheme of linearly normal curves in $\mathbb{P}^4$ of degree $d = g+1$ and genus $g$.} Arch. Math., \textbf{113} (2019), no. 4, 373--384.


\bibitem{Nasu}
{H. Nasu}, 
\textit{The Hilbert scheme of space curves of degree $d$ and genus $3d-18$.}
Comm. Algebra \textbf{36} (2008), 4163-4185.

\bibitem{Sakai}
{M. Ohkouchi and F. Sakai}
\textit{The gonality of singular plane curves.}
Tokyo J. Math., \textbf{27} (2004), 137-147.
\bibitem{Sernesi}
{C. Ciliberto and E. Sernesi},
{\it Families of varieties and the Hilbert scheme.}
Lectures on Riemann surfaces (Trieste, 1987), 428--499, World Sci. Publ., Teaneck, NJ, 1989.

\bibitem{Sev}{F.  Severi}, \textit{Vorlesungen \"uber algebraische Geometrie.}Teubner, Leipzig, 1921.
\bibitem{tyomkin}
{I. Tyomkin},
\textit{On Severi varieties on Hirzebruch surfaces} 
Int. Math. Res. Not., \textbf{23} (2007), Art. ID rnm109.

\end{thebibliography}

\end{document}